\documentclass[twoside]{article}
\usepackage[preprint]{fourmodes}
\usepackage[authoryear,round]{natbib}
\newif\ifjournalformat
\journalformatfalse
\usepackage{amsmath,amssymb,amsthm}
\usepackage{graphicx,booktabs,etoolbox}
\usepackage{microtype}
\usepackage{hyperref}
\hypersetup{colorlinks=true,linkcolor=blue,citecolor=blue,urlcolor=blue,
 pdftitle={Three Homoscedastic Gaussian Components Have at Most Four Modes},
 pdfauthor={},pdfsubject={Anonymous manuscript for peer review}}
\theoremstyle{plain}
\newtheorem{theorem}{Theorem}
\newtheorem{lemma}[theorem]{Lemma}
\newtheorem{proposition}[theorem]{Proposition}
\newtheorem{corollary}[theorem]{Corollary}
\theoremstyle{definition}

\theoremstyle{remark}

\newcommand{\figalt}[1]{\ifjournalformat\par\smallskip{\footnotesize\textit{Alt text:} #1}\fi}
\fancypagestyle{arxivfirst}{\fancyhf{}\fancyfoot[C]{\thepage}\renewcommand{\headrulewidth}{0pt}\renewcommand{\footrulewidth}{0pt}}

\hypersetup{
  pdftitle={Three Homoscedastic Gaussian Components Have at Most Four Modes},
  pdfauthor={Rintaro Okahara; Kei Kobayashi},
  pdfsubject={Preprint}
}

\begin{document}
\runningtitle{Three Gaussian Components Have at Most Four Modes}
\runningauthor{Rintaro Okahara, Kei Kobayashi}
\twocolumn[
\aistatstitle{Three Homoscedastic Gaussian Components Have at Most Four Modes}
\vspace{-0.18in}
\begin{center}
\renewcommand{\arraystretch}{1.15}
\begin{tabular}{@{}c@{\hspace{3.7cm}}c@{}}
\textbf{Rintaro Okahara}$^{\ast}$ & \textbf{Kei Kobayashi} \\
The University of Tokyo & Keio University \\
\small\href{mailto:karin1129@g.ecc.u-tokyo.ac.jp}{\texttt{karin1129@g.ecc.u-tokyo.ac.jp}} &
\small\href{mailto:kei@math.keio.ac.jp}{\texttt{kei@math.keio.ac.jp}} \\
\footnotesize $^{\ast}$Corresponding author &
\end{tabular}
\end{center}
\vspace{0.18in}
]
\thispagestyle{arxivfirst}
\begin{abstract}
Gaussian mixtures can have more modes than components. For three Gaussian components with a common covariance, four modes are attainable, whereas the best general upper bound was eight. We prove that four is the exact maximum in every dimension $d \ge 2$, for arbitrary means and positive weights. We further show that the sharp bound drops to three when the whitened means are collinear or form a right or obtuse triangle. The result covers degenerate modes without genericity or separate finiteness assumptions. Our proof maps modes to local minima of a separable free-energy functional on the probability simplex, reduces the problem to counting scalar inverse-function branches, and controls competing branches through curvature and stationary-height comparisons.
 \end{abstract}

\section{Introduction}

A Gaussian mixture can have more modes than components.  Already three
isotropic Gaussian densities in the plane can produce four modes
\citep{carreirawilliams2003}.  The phenomenon makes the number of modes a
geometric question rather than a direct measure of the number of mixture
components.  It also leaves a basic extremal problem: how many modes can
three components produce when their covariance matrices are equal?

The general problem of bounding the number of modes of a Gaussian mixture
has attracted attention in statistics, geometry, and algebra
\citep{amendola2020}.  For three components with a common covariance,
Okuno and Kabata obtained the upper bound eight and formulated four as the
conjectured sharp bound \citep{okuno2026}.  We prove that bound, for arbitrary
positive weights and arbitrary means.  We also show that a fourth mode
requires an acute triangle after whitening the covariance.

Throughout, a mode is a local maximum point, including a degenerate local
maximum. For a symmetric positive-definite matrix $\Sigma\in\mathbb R^{d\times d}$,
write
\[
 \varphi_\Sigma(z)=\frac{1}{(2\pi)^{d/2}\sqrt{\det\Sigma}}
 \exp\!\left(-\frac12z^{\mathsf T}\Sigma^{-1}z\right).
\]

\begin{theorem}\label{thm:main}
Let
\[
 f(x)=\sum_{i=1}^3 w_i\varphi_\Sigma(x-\mu_i),
 \qquad w_i>0,\quad \sum_{i=1}^3 w_i=1,
\]
be a density on $\mathbb R^2$.  Then $f$ has at most four modes, and this
bound is sharp.  If the whitened means $\Sigma^{-1/2}\mu_i$ form an obtuse
or right triangle, or are collinear, then $f$ has at most three modes.
The bound three is sharp in each of these three classes.
\end{theorem}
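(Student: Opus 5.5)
We first whiten: the affine change of variables $x\mapsto\Sigma^{-1/2}x$ maps modes to modes bijectively, so we may assume $\Sigma=I$ and the means $\mu_i\in\mathbb R^2$ are arbitrary. We then recast modes variationally, as the abstract indicates. For $p$ in the simplex $\Delta=\{p\in\mathbb R^3_{\ge0}:\sum p_i=1\}$, set
\[
 F(p)=-\tfrac12\Bigl\|\sum_i p_i\mu_i\Bigr\|^2+\sum_i p_i\bigl(\tfrac12\|\mu_i\|^2-\log w_i\bigr)+\sum_i p_i\log p_i .
\]
The Gibbs variational identity gives $-\log f(x)+\text{const}=\min_{p\in\Delta}\bigl[-\langle x,\sum_ip_i\mu_i\rangle+\tfrac12\|x\|^2+\sum_i p_i(\tfrac12\|\mu_i\|^2-\log w_i)+\sum_ip_i\log p_i\bigr]$, with the minimizer given by the posterior weights $p_i(x)\propto w_i e^{\langle x,\mu_i\rangle-\|\mu_i\|^2/2}$. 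Minimizing jointly in $(x,p)$ forces $x=\sum_ip_i\mu_i$. From this we will show that $x\mapsto p(x)$ and $p\mapsto\sum p_i\mu_i$ give a bijection between local maxima of $f$, degenerate ones included, and local minima of $F$ on the open simplex.

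The key point for degenerate modes is that local minimality is preserved in both directions by a min-over-fibres argument, so no Hessian nondegeneracy is needed. To check this we compose $f(x)=\exp(-\min_p G(x,p))$ and use that the fibre minimizer is unique and continuous. Since $\dim\operatorname{span}\{\mu_i-\mu_1\}\le 2$, the problem lives on a 2-dimensional affine slice, which justifies the reduction to $d=2$ (and to a line in the collinear case).

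Next comes the counting step. Stationarity of $F$ reads $\log p_i=\langle m,\mu_i\rangle+c_i+\lambda$ with $m=\sum p_j\mu_j$, which is just the fixed-point equation again. The separable structure suggests parametrizing differently. Write $a_{ij}=\langle\mu_i,\mu_j\rangle$. We eliminate $\lambda$ and one coordinate, so that critical points become solutions of $p_i=g_i(\text{linear forms})$ with each $g_i$ scalar and increasing. The entropy part is $\sum p_i\log p_i$, and its gradient $\log p_i$ has scalar inverse $\exp$. Stationarity therefore becomes a system in which each $p_i$ is a scalar function of $\langle m,\mu_i\rangle$. Along the edges of the triangle, or on level curves, we count the branches of the inverse of $t\mapsto t-\kappa\log(\cdot)$ type functions. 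Each such scalar equation has at most two or three branches, because the relevant function $h(s)=s(1-s)$ has a definite log-concavity. We aim to show that local minima correspond to choices of a "branch" on each of at most a bounded number of curves, and that a naive count gives $2\times2\times2=8$, which recovers Okuno–Kabata.

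The main obstacle is cutting $8$ down to $4$, that is, excluding configurations where too many branches simultaneously carry minima. I would compare competing minima pairwise using two tools. The first is curvature: along the segment joining two minima, $F$ restricted is $-\tfrac12 q(t)$ plus convex entropy, and a minimum needs the entropy curvature $\sum \dot p_i^2/p_i$ to dominate $\|\sum\dot p_i\mu_i\|^2$. The second is stationary heights: compare $F$ values at critical points via the identity $F=-\tfrac12\|m\|^2+\dots$ evaluated on the fixed-point equation. The hope is that a minimum near vertex $i$ (a "component" mode) and a minimum in the interior of edge $jk$ or the centre interact so that at most one interior "extra" mode survives. This gives $3+1$. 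For a non-acute triangle, the angle condition $\langle\mu_j-\mu_i,\mu_k-\mu_i\rangle\le0$ at the obtuse vertex $i$ should make the Hessian of $-\tfrac12\|m\|^2$ restricted near that configuration incompatible with the extra mode, giving at most $3$; the collinear case reduces to the known one-dimensional bound of $3$ modes for $3$ components. Finally, sharpness comes from explicit examples: an equilateral triangle with equal weights and side length tuned just beyond the threshold gives 4 modes (the Carreira-Perpiñán–Williams construction). For each non-acute class we take three well-separated means in the prescribed shape, which give 3 modes.
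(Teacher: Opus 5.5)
\textbf{Verdict: genuine gap.} Your variational reduction is correct and matches the paper's: profiling out $x$, the Gibbs identity, and the min-over-fibres argument transfer local minimality in both directions without any nondegeneracy assumption. Reducing the collinear case to the one-dimensional fact is also legitimate, and more direct than the paper's approximation by obtuse triangles. That fact says three common-variance Gaussians on a line have at most three modes: after whitening, $f'(x)e^{x^2/2}=\sum_i c_i(\mu_i-x)e^{\mu_i x}$ with $c_i>0$ has at most five real zeros.

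Everything after the reduction, however, is missing, and that is the actual content of the theorem. Your stationarity equation $\log p_i=\langle m,\mu_i\rangle+c_i+\lambda$ keeps all coordinates coupled through $m$, so it does not produce scalar problems. The missing observation is that on the simplex $\tfrac12\sum_i p_i\|\mu_i\|^2-\tfrac12\|\bar\mu(p)\|^2=\tfrac12\sum_i a_ip_i(1-p_i)$ with $a_i=(\mu_j-\mu_i)\cdot(\mu_k-\mu_i)$. Hence $F$ is separable, and every stationary coordinate solves its own equation $\log(p_i/w_i)-a_ip_i+a_i/2=\lambda$ in the single multiplier $\lambda$. Three consequences follow:
\begin{itemize}
\item A coordinate with $a_i>0$ has a lower and an upper solution, namely the two preimages under $\psi(t)=t-1-\log t$.
\item The constrained Hessian $\sum_i(1/p_i-a_i)v_i^2$ on $\sum_i v_i=0$ allows at most one upper coordinate at a minimum.
\item With coordinate $i$ upper, the point is a nondegenerate minimum exactly when the branch sum satisfies $S_i'(\lambda)<0$.
\end{itemize}
So the thing to count is one crossing of the increasing all-lower sum plus the downward crossings of the upper sums $S_i(\lambda)=1$. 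It is not a $2\times2\times2$ product of branch choices.

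Even with that structure, your route from the crude count to four is only a hope, and the heuristic behind it is false. The paper exhibits an acute mixture with two nondegenerate minima on the same upper branch. So no argument of the form ``each branch or interaction carries at most one extra mode'' can succeed. Your pairwise segment-curvature and height comparisons are not specified enough to replace it.

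What is needed is quantitative control of each $S_i$:
\begin{itemize}
\item Its stationary points are the intersections of the fold curve $B(\lambda)=\lambda+\psi\bigl(G'/(G'-\alpha_i)\bigr)$ with the barrier level $\eta_i$.
\item The sign of $B'$ equals that of $K-\alpha_i$, where $K=G'/(1+\sqrt{G''/G'})$.
\item A nontrivial curvature lemma ($\{K>\alpha_i\}$ is an interval) limits $S_i'$ to three zeros.
\item The shared endpoint $S_0(0-)=S_1(0-)$ then gives $m_0+m_1\le 2$, and opposite end signs give $m_2\le 2$.
\item The decisive step compares critical heights of $S_1$ and $S_2$ with the equal-barrier value $V_0=\alpha_1+\alpha_2+p_3^-(0)$. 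This shows the two equalities cannot hold at once.
\item In the acute case, $S_3''>0$ at every stationary point gives $m_3\le 1$.
\end{itemize}

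Your non-acute claim has the same gap. The bound three is not a local Hessian obstruction at the obtuse vertex. It is exactly the joint inequality $m_0+m_1+m_2\le 3$. The obtuse geometry enters only through the absence of a third upper branch and the Gram-determinant cap $(-a_3)^{-1}>\alpha_1+\alpha_2$ used in the curvature lemma.

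Right triangles and non-generic parameters (tied barriers, degenerate stationary points, points at branch junctions) also need a separate argument. The paper uses finiteness of critical points to make every mode strict, hence persistent under perturbation. Your sharpness examples are fine in outline, though the four-mode one must be checked explicitly, e.g. at $R^2=\tfrac83\log 2$.
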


\begin{corollary}[Dimension reduction]\label{cor:dimension}
For every $d\geq2$, a mixture of three Gaussian densities on
$\mathbb R^d$ with a common positive definite covariance has at most four
modes, and four is attained. The sharp bound is three when the whitened
means are collinear or form a right or obtuse triangle.
\end{corollary}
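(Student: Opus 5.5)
We derive the corollary from Theorem~\ref{thm:main} in three steps: whiten, restrict to the plane of the means, and check that modes of the full density correspond to modes of the restricted density.

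\emph{Whitening.} Set $y=\Sigma^{-1/2}x$. The density of $y$ is, up to the constant factor $\sqrt{\det\Sigma}$, $g(y)=\sum_i w_i\varphi_I(y-\nu_i)$ with $\nu_i=\Sigma^{-1/2}\mu_i$. This linear change of variables is a homeomorphism, so local maxima of $f$ and $g$ correspond bijectively, including degenerate ones. We may therefore take $\Sigma=I$.

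\emph{Reduction to a plane.} Since $d\ge2$, choose a two-dimensional affine plane $P$ containing $\nu_1,\nu_2,\nu_3$; if the means are collinear, any plane through their line will do. Write $y=p+q$ with $p\in P$ and $q$ in the orthogonal complement of the direction space of $P$, measured from a base point of $P$. Because $\|y-\nu_i\|^2=\|p-\nu_i\|^2+\|q\|^2$, we get
\[
 g(p+q)=(2\pi)^{-(d-2)/2}e^{-\|q\|^2/2}\,h(p),
 \qquad h(p)=\sum_i w_i\varphi_{I_2}(p-\nu_i),
\]
where $h$ is a three-component isotropic mixture on $P\cong\mathbb R^2$.

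\emph{Modes of $g$ correspond to modes of $h$.} Suppose $y=p+q$ is a local maximum of $g$. Fix $p$ and vary $q$: the factor $e^{-\|q\|^2/2}$ is strictly decreasing in $\|q\|$ away from $0$, so $q=0$. Now restrict to $q=0$: then $p$ is a local maximum of $h$. Conversely, let $p_0$ be a local maximum of $h$, with $h\le h(p_0)$ on a neighbourhood $U$ of $p_0$. For $p\in U$ and any $q$, $g(p+q)\le g(p)\le g(p_0)$, so $p_0$ is a local maximum of $g$. This holds for degenerate maxima as well, since only the inequalities are used and no Hessian condition enters. The modes of $g$ are therefore exactly the modes of $h$ embedded in $P$, and the two sets have equal cardinality.

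\emph{Conclusion.} Theorem~\ref{thm:main}, applied to $h$ with $\Sigma=I_2$, gives at most four modes. When the $\nu_i$ are collinear, or form a right or obtuse triangle (shape properties unaffected by how $P$ is identified isometrically with $\mathbb R^2$), it gives at most three. For sharpness, embed the extremal planar configurations from Theorem~\ref{thm:main} into $\mathbb R^d$ with $\Sigma=I_d$; by the correspondence above, the mode counts are preserved.

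The only real point of care is the correspondence of degenerate local maxima. The argument above is purely order-theoretic and relies on $q=0$ being the strict maximiser of the transverse factor, so it handles degenerate maxima without extra work. The rest is routine.
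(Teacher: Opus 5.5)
Your proof is correct and follows essentially the same route as the paper. Both arguments whiten, split $\mathbb R^d$ into a plane containing the whitened means and its orthogonal complement, factor the density as a transverse standard Gaussian times a planar three-component mixture, and apply Theorem~\ref{thm:main}; the paper first translates by $\mu_3$ so that it can use a linear subspace instead of your affine plane, which is cosmetic, and your explicit two-direction argument for degenerate as well as strict local maxima spells out the paper's one-line remark that the transverse factor has its unique mode at the origin.
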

The proof, given in Section~\ref{sec:sharpness}, reduces the mixture to the
affine span of its three whitened means.

The proof uses a separable free-energy functional on a two-dimensional
simplex. It is obtained by profiling the location variable out of the
variational free energy used for latent-variable models
\citep{nealhinton1998}. That joint location--responsibility free energy is
written explicitly for Gaussian mean shift by \citet{carreira2006}.
The same entropy--quadratic structure is classical in ternary mean-field
and regular-solution models; an early global three-component phase diagram
was given by \citet{furman1977}. Its common-coefficient, uniform-weight specialization is the
zero-field Curie--Weiss--Potts free energy \citep{elliswang1990}; arbitrary
positive weights correspond to vector-valued external fields, whose global
phase diagram and sharp bound four were established by \citet{kulske2020}.
\citet{gaite1998} classify the local singularities of a more general
ternary family. In mixture geometry, a common quadratic coefficient
corresponds to equilateral whitened means. We allow unequal coefficients
and arbitrary positive weights; an obtuse triangle additionally gives one
negative coefficient, subject to a geometric constraint. Our contribution
is the uniform sharp count for this general geometry, not the underlying
variational free-energy identity or the four-mode construction.

\paragraph{Proof roadmap.}
After whitening, Section~\ref{sec:reduction} profiles out the location
variable and identifies modes with local minima of a separable free energy
$F$ on the probability simplex. Appendix~\ref{app:reduction} proves this
correspondence, including at degenerate stationary points and for affinely
dependent means. Its stationary equations split into scalar branches, so
mode counting becomes the problem of counting where a small number of branch
sums cross one. Sections~\ref{sec:curvature}
and~\ref{sec:folds} bound each branch type. Those separate bounds still permit
one configuration with four minima among the first three types;
Section~\ref{sec:comparison} rules out that configuration by comparing the
heights of the required extrema. A final acute-only branch contributes at
most one more minimum.
Appendix~\ref{app:genericity} removes genericity, and
Section~\ref{sec:sharpness} proves sharpness and the corollary above.
Appendices~\ref{app:reduction}--\ref{app:algebra} contain the detailed
calculations and perturbation arguments. We use only the finiteness result
of \citet[Theorem~1]{okuno2026}.

\section{A separable free energy and its stationary branches}
\label{sec:reduction}

This section first explains why the free energy $F$ is the useful object,
and then rewrites its minima as a one-dimensional branch-counting problem.
For the generic parameters used in that count, the all-lower branch
contributes at most one minimum and every other minimum is a downward
crossing of a scalar branch sum through one.

\paragraph{Whitening and notation.}
Let $\Sigma^{1/2}$ be the symmetric positive-definite square root of
$\Sigma$, and set
\[
 y=\Sigma^{-1/2}x,\qquad \nu_i=\Sigma^{-1/2}\mu_i,
\]
and define the transformed density
\begin{equation}\label{eq:whitening}
 \widetilde f(y):=\sqrt{\det\Sigma}\,f(\Sigma^{1/2}y)
 =\sum_{i=1}^3w_i\varphi_{I_2}(y-\nu_i).
\end{equation}
The map $x\leftrightarrow y$ is invertible and the prefactor is positive,
so $f$ and $\widetilde f$ have corresponding local maxima. Moreover,
\[
 \|\nu_i-\nu_j\|^2
 = (\mu_i-\mu_j)^{\mathsf T}\Sigma^{-1}(\mu_i-\mu_j),
\]
so Euclidean geometry after whitening is precisely the Mahalanobis geometry
of the original means. From now on we rename $y$ as $x$, $\nu_i$ as
$\mu_i$, and $\widetilde f$ as $f$. Thus all means, norms, distances, and
angles below are in whitened coordinates, and the common covariance is
$I_2$.

\paragraph{Why this free energy?}
Rather than count maxima of $f$ directly, we introduce posterior weights
$p$ and profile out $x$; this turns the problem into constrained
minimum counting on a simplex.
On the open simplex
\[
 \mathcal P=\{p\in(0,1)^3:p_1+p_2+p_3=1\},
\]
write $\bar\mu(p)=\sum_i p_i\mu_i$ and introduce the joint variational
energy
\[
 \begin{aligned}
 D_{\mathrm{KL}}(p\Vert w)&=\sum_i p_i\log\frac{p_i}{w_i},\\
 \mathcal E(x,p)&=D_{\mathrm{KL}}(p\Vert w)
     +\frac12\sum_i p_i\|x-\mu_i\|^2.
 \end{aligned}
\]
Completing the square in $x$ gives
\begin{equation}\label{eq:profiled-potential}
 \begin{aligned}
 \mathcal E(x,p)&=F(p)+\frac12\|x-\bar\mu(p)\|^2,\\
 F(p)&=D_{\mathrm{KL}}(p\Vert w)
      +\frac12\sum_{i<j}p_ip_jd_{ij}^2.
 \end{aligned}
\end{equation}
where $d_{ij}^2=\|\mu_i-\mu_j\|^2$. Hence
$F(p)=\min_x\mathcal E(x,p)$: it is the energy left after profiling out
the location. For fixed $x$, the Gibbs variational identity gives
\[
 -\log\!\bigl(2\pi f(x)\bigr)=\min_{p\in\mathcal P}\mathcal E(x,p),
\]
whose unique minimizer is the posterior probability vector. The two
minimizations meet at
\begin{equation}\label{eq:posterior}
 x=\bar\mu(p),
 \qquad
 p_i=\frac{w_i e^{-\|x-\mu_i\|^2/2}}
 {\sum_jw_j e^{-\|x-\mu_j\|^2/2}}.
\end{equation}
The identity above links maxima of $f$ to the joint energy, while the local
correspondence lemma in Appendix~\ref{app:reduction} shows that
$p\mapsto\bar\mu(p)$ preserves local type, including at degenerate stationary
points. This is the reason for using $F$: it replaces the mode problem by a
constrained minimum problem on the two-dimensional simplex.
Equation~\eqref{eq:posterior} is
also the Gaussian mean-shift fixed-point relation \citep{carreira2007} and
places every stationary point on the Gaussian ridgeline
\citep{raylindsay2005}.

To make $F$ separable, define
\[
 a_i=\frac{d_{ij}^2+d_{ik}^2-d_{jk}^2}{2},
 \qquad \{i,j,k\}=\{1,2,3\}.
\]
Equivalently,
$a_i=(\mu_j-\mu_i)^{\mathsf T}(\mu_k-\mu_i)$, so its sign records whether
the angle at vertex $i$ is acute, right, or obtuse. Since
$d_{ij}^2=a_i+a_j$, \eqref{eq:profiled-potential} becomes
\begin{equation}\label{eq:potential}
 F(p)=\sum_{i=1}^3\left[
 p_i\log\frac{p_i}{w_i}+\frac{a_i}{2}p_i(1-p_i)\right].
\end{equation}
Here $\sum_i p_i\log p_i$ is negative Shannon entropy
\citep{shannon1948}, and $D_{\mathrm{KL}}$ is relative entropy
\citep{kullback1951}. The full $F$ is not an entropy, nor the differential
entropy of $f$; it is a reduced free energy with a relative-entropy term
and a quadratic interaction.

For a noncollinear configuration,
\begin{equation}\label{eq:gram}
 a_1a_2+a_2a_3+a_3a_1>0;
\end{equation}
indeed, the left-hand side is the Gram determinant of
$\mu_1-\mu_3$ and $\mu_2-\mu_3$. An acute triangle has $a_i>0$ for all
$i$; a right triangle has one zero coefficient; and an obtuse triangle has
exactly one negative coefficient.

The stationary equations for $F$ are
\begin{equation}\label{eq:stationary}
 \log\frac{p_i}{w_i}-a_ip_i+\frac{a_i}{2}=\lambda,
 \qquad \sum_i p_i=1.
\end{equation}
For $a_i>0$, put
\[
 \alpha_i=a_i^{-1},\qquad
 \eta_i=\log\frac{\alpha_i}{w_i}-1+\frac{1}{2\alpha_i}.
\]
The $i$th equation in \eqref{eq:stationary} is then
\[
 \psi\!\left(\frac{p_i}{\alpha_i}\right)=\eta_i-\lambda,
 \qquad \psi(t)=t-1-\log t.
\]
For $d>0$, let $L(d)<1<U(d)$ be the two positive solutions of
$\psi(t)=d$, and set $L(0)=U(0)=1$ by continuity. Thus the two
coordinate solutions are
\begin{equation}\label{eq:branches}
 p_i^-(\lambda)=\alpha_iL(\eta_i-\lambda),\qquad
 p_i^+(\lambda)=\alpha_iU(\eta_i-\lambda).
\end{equation}
They are analytic for $\lambda<\eta_i$ and meet at $p_i=\alpha_i$ when
$\lambda=\eta_i$. We call $\eta_i$ the barrier of coordinate $i$, and call
the two solutions its lower and upper branches. If $a_i\leq0$, the coordinate
equation instead has one positive solution, increasing and analytic on the
whole real line; we also denote this unique lower solution by
$p_i^-(\lambda)$.

Away from a branch junction, differentiating the coordinate equation gives
\begin{equation}\label{eq:hessian}
 \begin{gathered}
 p_i'=\left(\frac1{p_i}-a_i\right)^{-1},\\
 Q(v)=\sum_i\frac{v_i^2}{p_i'},\qquad \sum_i v_i=0,
 \end{gathered}
\end{equation}
where $Q$ is the constrained Hessian. Thus lower coordinates have positive
slope, whereas an upper coordinate has negative slope. At a nondegenerate
minimum, at most one coordinate can be upper.

Let $S_0(\lambda)$ be the sum of all lower coordinate solutions.  Let
$S_i(\lambda)$ be the sum with coordinate $i$ upper and all other
coordinates lower; it is defined only when $a_i>0$. A point on one of
these coordinate branches lies in $\mathcal P$ exactly when its coordinate
sum is one. Thus the simplex constraint becomes $S_i(\lambda)=1$. Since
\begin{equation}\label{eq:sumderivative}
 S_i'=\sum_jp_j',
\end{equation}
$S_0$ is increasing and contributes at most one minimum. When coordinate
$i$ is upper, the scalar minimum criterion in Appendix~\ref{app:reduction}
states that $Q$ is positive definite exactly when $S_i'<0$. Thus we must
count downward crossings of $S_i(\lambda)=1$.

We first work with noncollinear means, nonzero $a_i$, distinct barriers
and inverse coefficients for the positive coordinates, and stationary
points with nonsingular constrained Hessian that do not lie at branch
junctions. These conditions hold on a dense set of parameters.
Appendix~\ref{app:genericity} combines the known
finiteness theorem with persistence of modes and removes every genericity
assumption.

\smallskip
\noindent\textbf{Counting notation and target.}
Relabel the positive coordinates so that $\eta_1<\eta_2$ and, in the acute
case, $\eta_2<\eta_3$. Let $m_0$ be the number of solutions of
$S_0(\lambda)=1$, and, for each defined upper branch $S_i$, let $m_i$ be the
number of solutions of $S_i(\lambda)=1$ with $S_i'(\lambda)<0$, that is, the
number of downward crossings. In the generic setup these counts are precisely
the local minima of $F$ represented by the corresponding branches. Thus the
total is $m_0+m_1+m_2$ in the obtuse case and
$m_0+m_1+m_2+m_3$ in the acute case.

Sections~\ref{sec:curvature} and~\ref{sec:folds} will give
$m_0+m_1\leq2$ and $m_2\leq2$; the critical-height comparison then yields the
essential joint improvement $m_0+m_1+m_2\leq3$. In the acute case, the
remaining branch satisfies $m_3\leq1$, giving four. The limiting argument in
Appendix~\ref{app:genericity} then removes genericity and covers right and
collinear configurations.

\section{A shape lemma for two lower coordinates}
\label{sec:curvature}

Fix an upper branch $S_i$, and let
$G(\lambda)=\sum_{j\ne i}p_j^-(\lambda)$ be the sum of its two lower
coordinates. Their slopes are positive and increasing, so $G',G''>0$.
The single auxiliary function needed below is
\begin{equation}\label{eq:Kdef}
 K(\lambda)=\frac{G'(\lambda)}
 {1+\sqrt{G''(\lambda)/G'(\lambda)}}.
\end{equation}
The next section constructs a fold curve that increases exactly when
$K>\alpha_i$. Thus the following interval property is the only curvature
information needed in the main argument.

\begin{lemma}\label{lem:curvature}
For every upper branch $S_i$ in the noncollinear generic setup,
$\{\lambda:K(\lambda)>\alpha_i\}$ is an interval.
\end{lemma}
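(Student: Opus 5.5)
Write $j,k$ for the two lower coordinates of the upper branch $S_i$, so that $G=p_j^-+p_k^-$, and set
\[
u_\ell=p_\ell^{-\prime},\qquad r=\sqrt{G''/G'},\qquad K=\frac{G'}{1+r}.
\]
The domain of $G$ is the interval on which both lower branches are defined, namely $\lambda<\min(\eta_j,\eta_k)$ for positive coordinates and all of $\mathbb R$ for a nonpositive coordinate. The plan is to show that $K$ is \emph{quasi-concave} on this domain. Then every superlevel set $\{K>\alpha_i\}$ is an interval, and the lemma follows.

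The cleanest route to quasi-concavity is to show that $K$ is either monotone or unimodal. Concretely, I would show that at any critical point $\lambda_0$ with $K'(\lambda_0)=0$ we have $K''(\lambda_0)<0$. Since $K$ is analytic away from the junction, this excludes interior local minima, which is exactly the needed property.

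\textbf{Step 1: one-coordinate derivatives.} From \eqref{eq:hessian}, $1/p_\ell-a_\ell=1/u_\ell$. Differentiating gives
\[
-\frac{u_\ell}{p_\ell^{2}}=-\frac{u_\ell'}{u_\ell^{2}},
\qquad\text{so}\qquad
u_\ell'=\frac{u_\ell^{3}}{p_\ell^{2}}.
\]
This yields $G'=u_j+u_k$ and $G''=\sum_\ell u_\ell^3/p_\ell^2$. Differentiating once more,
\[
u_\ell''=\frac{3u_\ell^{5}}{p_\ell^{4}}-\frac{2u_\ell^{4}}{p_\ell^{3}}.
\]
On a lower branch $p_\ell<\alpha_\ell$ when $a_\ell>0$, so $u_\ell=p_\ell/(1-a_\ell p_\ell)>p_\ell$. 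Hence the quantity $s_\ell:=u_\ell/p_\ell$ satisfies $s_\ell\ge1$ in the positive case. For $a_\ell\le0$ it satisfies $0<s_\ell\le1$, and in that case $a_\ell p_\ell$ is bounded below by a constraint coming from \eqref{eq:gram}.

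\textbf{Step 2: the critical-point condition.} Write $K'=0$ as
\[
\frac{G''}{G'}=\frac{r'}{1+r},
\qquad\text{where}\qquad
r'=\frac{1}{2r}\left(\frac{G'''}{G'}-r^{4}\right).
\]
Then substitute this into $K''$. The sign of $K''$ at a critical point reduces to a polynomial inequality in the variables $(u_j,u_k,s_j,s_k)$. The weights drop out, because they enter only through $\lambda$ shifts.

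\textbf{Step 3: the one-coordinate case.} First test the degenerate situation where one coordinate dominates. With a single coordinate, $K$ becomes an explicit function of $s=u/p$:
\[
K=\frac{u}{1+\sqrt{u^{2}/p^{2}}}=\frac{u}{1+s}=\frac{p\,s}{1+s}.
\]
Both $p$ and $s$ increase along the lower branch, so this $K$ is monotone. I would use that monotonicity as the base case.

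\textbf{Step 4 (main obstacle): two coordinates.} The hard part is the genuine two-coordinate mixing. A sum of two monotone quantities normalized by $1+r$ need not be quasi-concave in general. My first attempt would be to change the variable from $\lambda$ to the monotone parameter $G'$, or to $t=\log G'$. I would then show that $\log K=\log G'-\log(1+r)$ is concave in $t$, which amounts to showing that $\log(1+r)$ is convex in $t$. The tool here would be a Cauchy--Schwarz/Hölder bound comparing $G'''G'$ with $(G'')^2$ via $\sum u^5/p^4$ against $\sum u^3/p^2$, using $s_\ell$ bounds from Step 1.

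If this direct concavity fails, the fallback is different. I would argue that $\{K>\alpha_i\}$ cannot have two components by exhibiting a zero of $K'$ between them. Then I would count zeros of $K'$ via the monotonicity of $s_j/s_k$ along $\lambda$, which follows from the distinct-barrier genericity assumption. That count would show that $K'$ changes sign at most once, from positive to negative.
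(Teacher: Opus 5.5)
Your Steps 1--2 are correct bookkeeping: your $s_\ell$ is the paper's $z=1/(1-a_\ell p_\ell)$, and $u_\ell'=u_\ell^3/p_\ell^2$ is its $\rho'=\rho z^2$. When both lower coefficients are positive, your target ``every critical point of $K$ is a strict local maximum'' is also what the paper proves.

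\textbf{The global claim is false in the mixed case.} The quasi-concavity you aim for fails whenever one lower coefficient is negative, and that is the situation of \emph{every} upper branch of an obtuse triangle. There the lemma holds only because of the particular threshold $\alpha_i$, which your plan never uses.

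Let $j$ be the positive and $k$ the negative lower coordinate, and put $\gamma=(-a_k)^{-1}$. Then $s_j>1>s_k>0$, with slopes $u_j=\alpha_j(s_j-1)$ and $u_k=\gamma(1-s_k)$. Take $w_k/w_j$ huge, so that $s_k$ is already near $0$ (slope near $\gamma$) while $s_j$ runs from near $1$ to $\infty$. With $x=s_j-1$ and $g=\gamma/\alpha_j$,
\[
 K\approx\alpha_j\,\frac{x+g}{1+(1+x)\sqrt{x/(x+g)}} .
\]
This starts near $\gamma$ and tends to $\alpha_j$. For $1<g<4/3$ it dips below $\alpha_j$ on the way; for $g=1.2$ the minimum is about $0.935\,\alpha_j$, near $x=0.7$. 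Since $K\to0$ as $\lambda\to-\infty$, $K$ rises, falls and rises again. Hence $\{K>\beta\}$ is disconnected for $\beta$ just below $\alpha_j$.

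This is compatible with the Gram inequality, which only gives $\gamma>\alpha_i+\alpha_j$ (e.g.\ $g=1.2$ with $\alpha_i<0.2\,\alpha_j$). It is exactly $\alpha_i<\gamma-\alpha_j$ that rescues the lemma. The Gram condition does not bound the state $a_\ell p_\ell$, as your Step 1 suggests; it links the coefficients, including the upper one.

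\textbf{The missing idea.} You need to show that critical points of $K$ \emph{below the cap} $\gamma-\alpha_j$ are maxima. The paper does this with $\delta=s_j+s_k-r$. The identity $\gamma-\alpha_j-K=(\text{positive})\,(\delta-1)$ turns the cap into $\delta>1$. The factorization $K'/K=-(\text{positive})(3\delta-2)Z$, with $Z=1/s_j+1/s_k-1/r-3/(3\delta-2)$, together with $Z'>2$ at critical points where $\delta>1$, then gives $K''<0$ there. With two positive coefficients, $s_j,s_k>1$ forces $\delta>1$ everywhere.

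\textbf{Your Step 4 tools also fail on their own.} Concavity of $\log K$ in $t=\log G'$ is impossible. Here $t$ runs over all of $\mathbb R$, since $G'\to0$ at $-\infty$ and $G'\to\infty$ at the first positive lower barrier $\eta_j$, while $\log K\to\log\alpha_j$. A concave $\log K$ would therefore be nondecreasing. But near $\eta_j$,
\[
 K=\alpha_j\Bigl(1+\frac{\tfrac32 b-2}{s_j}\Bigr)+O(s_j^{-2}),
\]
where $b$ is the other lower slope divided by $\alpha_j$. So $K$ decreases to $\alpha_j$ from above whenever $b>4/3$. Such non-monotone $K$ is exactly what the later three-piece fold analysis must accommodate.

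The fallback counts nothing. $K'$ depends on the slope ratio $u_j/u_k$ (through $r$), not only on $s_j/s_k$. Monotonicity of $s_j/s_k$, which follows from $z'=z^2(z-1)$ rather than from barrier genericity, therefore cannot bound the sign changes of $K'$. Its claimed conclusion, a single sign change, is false in the mixed case by the example above.

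In the all-positive case, the ``polynomial inequality'' of Step 2 is the whole content of the lemma and remains unproved. The paper makes it tractable by writing $r^2=\mathbb E[X^2]$ for the two-point variable $X\in\{s_j,s_k\}$ with probabilities $u_j/G'$ and $u_k/G'$. Then $\operatorname{Var}(X^2)$ and $\mathbb E[X^3]+r^3$ factor.

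A minor point: in Step 3, $s$ decreases along a negative-coefficient lower branch. There $K=\gamma(1-s)/(1+s)$, which is still monotone.
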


\begin{proof}
The key point is that $K$ has no local minimum at a level relevant to the
superlevel set.  We give the calculation because it is the main shape input
to the branch count.

For either lower coordinate, write $a$ for its coefficient and $p$ for its
solution, and set
\[
 z=\frac1{1-ap},\qquad \rho=p'.
\]
Differentiating the coordinate equation in \eqref{eq:stationary} gives the
common differential system
\begin{equation}\label{eq:rho-z-system}
 \rho=pz,\qquad z'=z^2(z-1),\qquad \rho'=\rho z^2.
\end{equation}
Relabel the two values of $z$ as $u>v>0$, with corresponding positive
slopes $\rho_+,\rho_-$.  Put
\[
 J=\rho_++\rho_-=G',\qquad
 r=\sqrt{J'/J}=\sqrt{G''/G'},
\]
so that $K=J/(1+r)$.  At a fixed $\lambda$, let $X$ take the values $u,v$
with probabilities $\rho_+/J,\rho_-/J$.  Then
$\mathbb E[X^2]=r^2$ and $v<r<u$.

Set $s=u+v$, $t=uv$, $N=(r+u)(r+v)$, and
\begin{equation}\label{eq:delta-Z}
 \delta=u+v-r,\qquad
 Z=\frac1u+\frac1v-\frac1r-\frac3{3\delta-2}.
\end{equation}
The two-point identities
\[
 \operatorname{Var}(X^2)=N(r\delta-t),
 \qquad
 \mathbb E[X^3]+r^3=\frac{N(sr-t)}s
\]
follow without a fourth-moment expansion.  Indeed,
$(X^2-u^2)(X^2-v^2)=0$ gives
\[
 \mathbb E[X^4]=(u^2+v^2)r^2-u^2v^2,
\]
and subtracting $r^4$ yields the variance identity.  Likewise,
$X^2=sX-t$ gives
\[
 \mathbb E[X]=\frac{r^2+t}{s},\qquad
 \mathbb E[X^3]=sr^2-\frac{t(r^2+t)}s,
\]
which factors into the displayed third-moment identity.  The probabilities
defining $X$ also depend on $\lambda$, so we differentiate $J'$ explicitly:
\begin{align*}
 J'&=\rho_+u^2+\rho_-v^2,\\
 J''&=\rho_+(3u^4-2u^3)\\
    &\quad+\rho_-(3v^4-2v^3).
\end{align*}
Hence
\[
 2rr'=\frac{J''}{J}-\left(\frac{J'}J\right)^2
 =3\mathbb E[X^4]-2\mathbb E[X^3]-r^4,
\]
and therefore
\begin{align}
 2r\{r'-r^2(1+r)\}
 &=3\operatorname{Var}(X^2)-2\{\mathbb E[X^3]+r^3\}\notag\\
 &=\frac{rtN(3\delta-2)}s Z.
 \label{eq:rprime-Z-main}
\end{align}
Consequently, on the region $\delta>1$,
\begin{equation}\label{eq:Kprime-Z-main}
 \frac{K'}K
 =-\frac{uv(r+u)(r+v)(3\delta-2)}
 {2(u+v)(1+r)}Z.
\end{equation}
The prefactor is strictly positive.  Thus at a stationary point of $K$
with $\delta>1$, $Z=0$ and direct differentiation of $K=J/(1+r)$ gives
$r'=r^2(1+r)$.  At that same point, \eqref{eq:rho-z-system} gives
\begin{align*}
 \left(\frac1u+\frac1v-\frac1r\right)'
 &=-(u-1)-(v-1)+(r+1)\\
 &=3-\delta.
\end{align*}
To obtain the remaining derivative, recall $s=u+v$, $t=uv$, and
$\delta=s-r$.  The scalar differential equation gives
\[
 u'+v'=s^2(s-1)-t(3s-2).
\]
Using $s^3-r^3-\delta^3=3sr\delta$ and
$s^2+r^2-\delta^2=2sr$, we find
\begin{align*}
 \delta'-\delta^2(\delta-1)
 &=s^2(s-1)-r^2(r+1)\\
 &\quad-t(3s-2)-\delta^2(\delta-1)\\
 &=3sr\delta-2sr-t(3s-2)\\
 &=rt(3\delta-2)Z=0.
\end{align*}
Hence $\delta'=\delta^2(\delta-1)$ at the stationary point.
Differentiating \eqref{eq:delta-Z} before imposing these pointwise
relations therefore gives
\begin{align}
 Z'&=3-\delta+\frac{9\delta^2(\delta-1)}{(3\delta-2)^2}\notag\\
 &=2+(\delta-1)
 \left[\left(\frac{3\delta}{3\delta-2}\right)^2-1\right]\notag\\
 &>2.
 \label{eq:Zprime-main}
\end{align}
Differentiating \eqref{eq:Kprime-Z-main} now shows that $K''<0$ at every
stationary point for which $\delta>1$.

It remains to check that this region contains every stationary point that
could disconnect the required superlevel set.  If both lower coefficients
are positive, then $u>v>1$ and $r<u$, hence $\delta>v>1$ everywhere.  If
$a_j>0>a_k$, put $\alpha_j=a_j^{-1}$ and
$\gamma=(-a_k)^{-1}$.  In this case $u>1>v>0$ and
$\rho_+=\alpha_j(u-1)$, $\rho_-=\gamma(1-v)$.  Eliminating their ratio from
$r^2=(\rho_+u^2+\rho_-v^2)/J$ first gives
\[
 \frac{\gamma}{\alpha_j}
 =\frac{(u-r)(u+r)(u-1)}
 {(r-v)(r+v)(1-v)}.
\]
On the other hand,
\[
 \gamma-\alpha_j-K
 =\frac{\gamma(r+v)-\alpha_j(r+u)}{1+r}.
\]
Substituting the ratio and factoring gives
\begin{equation}\label{eq:mixed-cap-main}
 \gamma-\alpha_j-K
 =\frac{\alpha_j(r+u)(u-v)}
 {(1+r)(r-v)(1-v)}(\delta-1).
\end{equation}
Thus $K<\gamma-\alpha_j$ is equivalent to $\delta>1$.  By
\eqref{eq:gram},
\[
 0<\alpha_i<\gamma-\alpha_j.
\]

Finally, if a strict superlevel set of an analytic $K$ were disconnected,
the gap between two components would contain a stationary local minimum.
With two positive lower coefficients, every stationary point instead has
$K''<0$.  In the mixed case, an intervening minimum for
$\{K>\alpha_i\}$ has value at most $\alpha_i<\gamma-\alpha_j$; by
\eqref{eq:mixed-cap-main} it again lies in $\delta>1$ and has $K''<0$, a
contradiction.  Hence $\{K>\alpha_i\}$ is an interval.  Appendix~\ref{app:algebra}
records the factorization details used above.
\end{proof}

\section{Fold curves and the number of crossings}
\label{sec:folds}

This section first combines the interval property of $\{K>\alpha_i\}$ with
the branch equations to bound the stationary points of an arbitrary upper
branch. It then converts that fold bound into bounds on the branch counts
$m_i$. Only one simultaneous extremal case will remain for
Section~\ref{sec:comparison}.

Continue with a fixed upper branch $S_i$ and its lower-coordinate sum $G$.
Rather than analyze $S_i'$ directly, we solve $S_i'(\lambda)=0$ for its
already defined barrier $\eta_i$. Writing $U=U(\eta_i-\lambda)$ and using
the inverse-branch derivative gives
\[
 S_i'(\lambda)=G'(\lambda)
 -\alpha_i\frac{U}{U-1}.
\]
The derivative is negative whenever $G'\leq\alpha_i$.  If
$G'>\alpha_i$, then
\begin{align*}
 S_i'=0
 &\ \Longleftrightarrow\ G'(U-1)=\alpha_iU\\
 &\ \Longleftrightarrow\ U=\frac{G'}{G'-\alpha_i}>1\\
 &\ \Longleftrightarrow\
 \eta_i-\lambda=\psi\!\left(\frac{G'}{G'-\alpha_i}\right).
\end{align*}
Thus a zero is equivalent to intersecting the following fold curve with
the barrier level:
\begin{equation}\label{eq:foldcurve}
 \eta_i=B(\lambda),\qquad
 B(\lambda)=\lambda+\psi\!\left(\frac{G'}{G'-\alpha_i}\right).
\end{equation}
In particular, a solution of \eqref{eq:foldcurve} automatically satisfies
$\lambda<\eta_i$. Horizontal intersections of $B$ with the fixed level
$\eta_i$ locate stationary points of $S_i$. Modes are instead downward
crossings of $S_i(\lambda)=1$; bounding its stationary points bounds the
decreasing pieces on which those crossings can occur.  Put
$U_B=G'/(G'-\alpha_i)$.  Since
\[
 U_B'=-\frac{\alpha_iG''}{(G'-\alpha_i)^2},
 \qquad
 \psi'(U_B)=\frac{\alpha_i}{G'},
\]
differentiation gives
\begin{equation}\label{eq:Bprime}
 B'=1-\frac{\alpha_i^2G''}{G'(G'-\alpha_i)^2},\qquad
 \operatorname{sign}B'=\operatorname{sign}(K-\alpha_i).
\end{equation}
Indeed, after taking positive square roots, $B'>0$ is equivalent to
\[
 G'-\alpha_i>\alpha_i\sqrt{G''/G'},
\]
which is exactly $K>\alpha_i$.  The same equivalence holds with equality
or with the inequality reversed.

By \eqref{eq:rho-z-system}, both lower slopes are strictly increasing, so
$G'$ is strictly increasing.  Moreover, $G'\to0$ as
$\lambda\to-\infty$ and $G'\to\infty$ at the first positive-coefficient
lower barrier.  Thus the fold domain $G'>\alpha_i$ is an interval. As
$G'\downarrow\alpha_i$, one has $U_B\to\infty$ and
$B\to\infty$. If lower coordinate $j$ has the smallest barrier among the
positive-coefficient lower coordinates, its $z$ tends to infinity while
the other lower coordinate remains regular. Consequently,
\[
 G'\sim\alpha_jz,\qquad G''\sim\alpha_jz^3,
 \qquad U_B\to1,
\]
and at the other endpoint
\begin{equation}\label{eq:Bendpoints}
 B(\eta_j-)=\eta_j,\qquad K(\eta_j-)=\alpha_j.
\end{equation}
We will also need the height of $S_i$ at a stationary point. On the fold,
its derivative with respect to the fold parameter is
\begin{equation}\label{eq:criticalheight}
 \frac{d}{d\lambda}\left[
 G(\lambda)+\alpha_i\frac{G'(\lambda)}{G'(\lambda)-\alpha_i}
 \right]=G'(\lambda)B'(\lambda).
\end{equation}
Thus the fold curve and its stationary height increase or decrease together.
Appendix~\ref{app:branches} records the remaining endpoint details.

By Lemma~\ref{lem:curvature}, $\{K>\alpha_i\}$ is an interval. Near the
left endpoint, where $G'\downarrow\alpha_i$, one has $K<\alpha_i$.
Hence $B$ has at most three successive monotone pieces,
with directions decrease, increase, decrease; some pieces may be absent.
Analyticity rules out a constant piece, and isolated zeros of $B'$ do not
affect strict monotonicity.  Every horizontal level therefore meets $B$
at most three times.  We have proved the following common bound.

\begin{lemma}\label{lem:threefolds}
Every upper-branch derivative has at most three distinct zeros on its
physical domain. If its fold curve ends at a lower-coordinate barrier zero
and that coordinate has inverse coefficient larger than $\alpha_i$, then
every nonnegative horizontal level meets the fold curve exactly once.
\end{lemma}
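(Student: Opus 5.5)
The first assertion is essentially the content of the preceding paragraph, and we will package it as follows. Zeros of $S_i'$ on the physical domain $\lambda<\eta_i$ can only occur where $G'>\alpha_i$, since $S_i'<0$ whenever $G'\le\alpha_i$. On that fold domain, which is an interval by the monotonicity of $G'$, they are exactly the solutions of $B(\lambda)=\eta_i$ by \eqref{eq:foldcurve}. By \eqref{eq:Bprime}, $\operatorname{sign}B'=\operatorname{sign}(K-\alpha_i)$. By Lemma~\ref{lem:curvature}, the set $\{K>\alpha_i\}$ is an interval $I$ inside the fold domain. Near the left end of the fold domain, $K<\alpha_i$, because $G'\downarrow\alpha_i$ while $G''/G'$ stays bounded away from zero.

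It follows that $B$ is strictly decreasing to the left of $I$, strictly increasing on $I$, and strictly decreasing to the right of $I$. Zeros of $B'$ are isolated by analyticity, so no constant piece occurs. Each strictly monotone piece meets the level $\eta_i$ at most once, which gives at most three zeros.

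For the second assertion, suppose the fold domain ends at the barrier $\eta_j$ of a lower coordinate $j$ with $\alpha_j>\alpha_i$. We show that $K>\alpha_i$ can fail only near the left endpoint, so that $B$ has exactly two pieces: one decreasing and one increasing. By \eqref{eq:Bendpoints}, $K(\eta_j-)=\alpha_j>\alpha_i$, so the interval $I$ reaches the right endpoint, and the final decreasing piece is absent.

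Thus $B$ decreases from $+\infty$, where $G'\downarrow\alpha_i$, to a minimum value $B_{\min}$, and then increases to $B(\eta_j-)=\eta_j$. Here we interpret ``level'' as the shifted height $\eta_i-\lambda=\psi(U_B)\ge0$, that is, the fold written in the variable $\psi(U_B)$. I expect the intended statement to read: the map $\lambda\mapsto B(\lambda)-\lambda=\psi(U_B)$ takes each value $c\ge0$ exactly once.

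To prove this I will use that $U_B=G'/(G'-\alpha_i)$ is strictly decreasing in $\lambda$, since $U_B'<0$ as computed before \eqref{eq:Bprime}. It runs from $+\infty$ down to $1$ as $\lambda$ traverses the fold domain, again using $U_B\to1$ at $\eta_j$. Because $\psi$ is increasing on $(1,\infty)$ with range $(0,\infty)$, the value $\psi(U_B)$ decreases bijectively from $\infty$ to $0$. Hence every nonnegative level is met exactly once, with the level $0$ attained in the limit at the endpoint.

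If instead the lemma is meant literally for $B$ against $\eta_i$, then existence comes from the intermediate value theorem between $B=+\infty$ and $B(\eta_j-)=\eta_j$. Uniqueness would require $\eta_i\ge\eta_j$ so as to exclude the increasing piece, and the ordering of barriers would supply this.

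The main obstacle is this endpoint analysis. We must confirm that $K\to\alpha_j$ and $U_B\to1$ hold exactly at the lower barrier when the other lower coordinate stays regular, which is the asymptotics $G'\sim\alpha_jz$ and $G''\sim\alpha_jz^3$. We must also confirm that $K<\alpha_i$ near the left end of the fold domain. I would verify the first by the system \eqref{eq:rho-z-system}, using $z\to\infty$ and $p\to\alpha_j$. For the second, note that $\sqrt{G''/G'}$ is bounded below by the positive value $\sqrt{J'/J}=r>v$.
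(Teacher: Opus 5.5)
Your proof of the first assertion is correct and matches the paper's. Zeros of $S_i'$ lie in the interval $\{G'>\alpha_i\}$, where they are the solutions of $B(\lambda)=\eta_i$. Also $\operatorname{sign}B'=\operatorname{sign}(K-\alpha_i)$, and Lemma~\ref{lem:curvature} together with $K<\alpha_i$ at the left end leaves at most three strictly monotone pieces. The second assertion, however, has a genuine gap caused by a misreading.

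\textbf{What the hypothesis says.} ``Ends at a lower-coordinate barrier zero'' means that the barrier $\eta_j$ terminating the fold domain equals $0$, since barriers are shifted so that $\eta_1=0$. Hence $B(\eta_j-)=\eta_j=0$. The horizontal levels are the values of $\eta_i$ against which $B$ itself is intersected, as in the first assertion.

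\textbf{Why your main reinterpretation fails.} You prove that $\psi(U_B(\lambda))$ takes each value $c\ge0$ exactly once. This cannot be the intended content, because it uses neither $\alpha_j>\alpha_i$ nor the value of $\eta_j$: $U_B=G'/(G'-\alpha_i)$ decreases from $+\infty$ to $1$ on every such fold domain. It is also useless downstream. Stationary points of $S_i$ solve $\psi(U_B(\lambda))=\eta_i-\lambda$, which is an intersection with a line of slope $-1$, not with a horizontal level. Lemma~\ref{lem:separation} needs the literal statement about $B$ at level $0$ for $S_1^{\mathrm{eq}}$. The improvement $m_2\le1$ needs it at level $\eta_2>0$.

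\textbf{What your fallback is missing.} Your literal-reading fallback has the right mechanism but is not completed. With $\eta_j=0$, the limit $K(0-)=\alpha_j>\alpha_i$ makes $\{K>\alpha_i\}$ a nonempty terminal interval. On it, $B$ increases strictly to the unattained limit $0$, so $B<0$ on that whole piece and $B_{\min}<0$. The first piece therefore decreases strictly from $+\infty$ to a value below $0$ and meets every level $c\ge0$ exactly once, while the increasing piece meets none.

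Uniqueness does not come from an ordering of barriers; for $S_1$ itself, $\eta_i=\eta_1<\eta_2=\eta_j$. It comes from the level being at least the terminal value $\eta_j=0$, which is exactly what ``barrier zero'' plus ``nonnegative level'' provides.

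Existence at the boundary level $c=0$ is the case Lemma~\ref{lem:separation} actually uses. It does not follow from the intermediate value theorem between $+\infty$ and the unattained limit $\eta_j$. It requires $B_{\min}<0$, which comes from the strict increase on the terminal piece.
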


For the second assertion, $K(0-)=\alpha_j>\alpha_i$ makes
$\{K>\alpha_i\}$ a terminal interval. Thus $B$ decreases and then increases to zero; its
last increasing piece lies below zero.  This proves the assertion.

We now turn the fold bound into branch counts. Retaining the ordering above,
shift all barriers so that $\eta_1=0<\eta_2$; in the acute case, also
$\eta_2<\eta_3$. Then
\begin{align}
 S_1(\lambda)&=\alpha_1U(-\lambda)
   +\alpha_2L(\eta_2-\lambda)+p_3^-(\lambda),
 \label{eq:S1}\\
 S_2(\lambda)&=\alpha_1L(-\lambda)
   +\alpha_2U(\eta_2-\lambda)+p_3^-(\lambda).
 \label{eq:S2}
\end{align}
All physical solutions have $\lambda<0$, and $p_3^-$ is analytic near zero.
A single upper branch can contribute two minima:
Appendix~\ref{app:two-on-one} gives an exact example. Thus counting branch
types alone would not prove the theorem.

The derivative of $S_1$ is negative near both ends:
\[
 \lim_{\lambda\to-\infty}S_1'(\lambda)=-\alpha_1,
 \qquad
 \lim_{\lambda\uparrow0}S_1'(\lambda)=-\infty.
\]
Lemma~\ref{lem:threefolds} allows at most two sign changes and hence at
most two decreasing pieces. Here $S_0(0-)=S_1(0-)$. Equality with one
would place a stationary point at a branch junction and is excluded by
genericity. If this common endpoint value is below one, the all-lower branch
gives no mode. If it is above one, that branch gives one mode, but the last
decreasing piece of $S_1$ ends above one and gives no downward crossing.
Therefore
\begin{equation}\label{eq:firstgroup}
 m_0+m_1\leq2.
\end{equation}
The derivative of $S_2$ is negative near minus infinity and positive
near zero.  At most three sign changes give at most two decreasing pieces,
so
\begin{equation}\label{eq:secondgroup}
 m_2\leq2.
\end{equation}
If $\alpha_2<\alpha_1$, the second assertion of
Lemma~\ref{lem:threefolds} improves this to $m_2\leq1$.

\begin{corollary}[The last acute branch]\label{cor:lastbranch}
In the acute case, the branch $S_3$ with the largest barrier contributes
at most one minimum: $m_3\leq1$.
\end{corollary}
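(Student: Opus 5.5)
We work in the acute case with the normalization above, $\eta_1=0<\eta_2<\eta_3$, and shift so that the barrier of coordinate $3$ sits at the right end of the physical domain of $S_3$. Then
\[
 S_3(\lambda)=\alpha_1L(-\lambda)+\alpha_2L(\eta_2-\lambda)+\alpha_3U(\eta_3-\lambda),
\]
which is defined for all $\lambda<\eta_3$. The physical constraint $p_1^-,p_2^-<1$ is automatic from the sum being one. The lower-coordinate sum is $G=p_1^-+p_2^-$, and both of its coefficients are positive.

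The first step is to locate the fold domain of $S_3$. The smallest positive-coefficient lower barrier among coordinates $1,2$ is $\eta_1=0$, so $G'\to\infty$ as $\lambda\uparrow0$. Hence $S_3$ is only relevant to the fold analysis on the interval where $G'>\alpha_3$, which ends at $0$. There $B(0-)=0$ by \eqref{eq:Bendpoints}, and $K(0-)=\alpha_1$. For $\lambda\in[0,\eta_3)$, the first lower coordinate has passed its barrier and is no longer defined. So the physical domain of $S_3$ is $\lambda<0$: the upper coordinate $3$ can only coexist with lower coordinate $1$ on $\lambda<\eta_1=0$, because $p_1^-$ is defined only for $\lambda\le\eta_1$.

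The second step is to apply the second assertion of Lemma~\ref{lem:threefolds}. The fold curve of $S_3$ ends at the lower-coordinate barrier zero of coordinate $1$. We then need $\alpha_1>\alpha_3$, and this is where I expect the main obstacle. The barrier ordering $\eta_1<\eta_3$ does not directly force $\alpha_1>\alpha_3$, since $\eta_i$ also involves $w_i$.

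I would try first to avoid needing the comparison outright. If $\alpha_1>\alpha_3$, the lemma says every nonnegative level $\eta_3>0$ meets $B$ exactly once. Hence $S_3'$ has exactly one zero on $\lambda<0$. Since $S_3'\to-\alpha_3<0$ at $-\infty$, it then has sign pattern negative then positive, giving a single decreasing piece and $m_3\le1$.

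If instead $\alpha_1<\alpha_3$, then $K(0-)=\alpha_1<\alpha_3$. Near the left endpoint we also have $K<\alpha_3$. Lemma~\ref{lem:curvature} says $\{K>\alpha_3\}$ is an interval, so $B$ decreases, possibly increases, then decreases to $B(0-)=0<\eta_3$. The final decreasing piece lies below $\eta_3$ only near its end. I would argue that a horizontal level $\eta_3>0=B(0-)$ meets $B$ at most twice, once on each decreasing flank of the increasing hump. So $S_3'$ has at most two zeros, with sign negative at $-\infty$. At $0-$, $U(\eta_3-\lambda)$ stays regular while $G'\to\infty$, so $S_3'\to+\infty$. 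The parity of sign changes then forces exactly one zero, or three, and three is excluded, so there is one decreasing piece.

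In either case, $S_3$ is decreasing and then increasing on its physical domain. So $S_3=1$ has at most one downward crossing, giving $m_3\le1$. I expect the delicate point to be justifying the endpoint behaviour $S_3'(0-)=+\infty$ together with the constraint $\eta_3>0$, which guarantees that the level lies strictly above $B(0-)$; this is exactly what the parity argument needs.
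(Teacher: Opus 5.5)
Your first case, $\alpha_1>\alpha_3$, is correct. The fold curve of $S_3$ ends at $B(0-)=\eta_1=0$ with $K(0-)=\alpha_1>\alpha_3$, so the second assertion of Lemma~\ref{lem:threefolds} gives exactly one zero of $S_3'$ at level $\eta_3>0$.

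The gap is in the case $\alpha_1<\alpha_3$. Suppose the increasing piece is present. Then $B$ starts at $+\infty$, decreases to a local minimum value $m$, increases to a local maximum value $M>0$, and decreases to $B(0-)=0$. Any level in $(\max\{m,0\},M)$ meets $B$ three times, once on each monotone piece. Nothing in your argument keeps $\eta_3$ out of that interval.

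Your description ``at most twice, once on each decreasing flank'' is also internally inconsistent. A level that meets both decreasing flanks lies strictly between $m$ and $M$, so by the intermediate value theorem it also meets the increasing piece. With three zeros, the end signs ($-\alpha_3$ at $-\infty$, $+\infty$ at $0-$) allow the pattern $-,+,-,+$. That gives two decreasing pieces and only $m_3\le 2$. This is exactly the pattern the paper cannot exclude for $S_2$ by the fold argument. Your reasoning never separates $S_3$ from $S_2$: it uses $\eta_3>\eta_1$ but never $\eta_3>\eta_2$.

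The missing ingredient is the fact that $\eta_3$ is the \emph{largest} barrier. The paper uses it pointwise. Put $\ell_i=L(\eta_i-\lambda)$ for $i=1,2$, and $\ell_3=L(\eta_3-\lambda)$, $u_3=U(\eta_3-\lambda)$.
\begin{itemize}
\item Since $\eta_i<\eta_3$ and $L$ is decreasing, $\ell_i>\ell_3$.
\item The conjugate inequality $L(d)+U(d)>2$ gives $1-\ell_3<u_3-1$.
\item Hence $z_i=1/(1-\ell_i)>|z_3|=1/(u_3-1)$ for $i=1,2$.
\item At a stationary point, $\rho_3=-(\rho_1+\rho_2)$, and $\rho_j'=\rho_jz_j^2$ gives $S_3''=\rho_1(z_1^2-z_3^2)+\rho_2(z_2^2-z_3^2)>0$.
\end{itemize}
So every stationary point of $S_3$ is a strict local minimum, and there is at most one. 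This yields $m_3\le1$ with no case split on $\alpha_1$ versus $\alpha_3$ and no use of Lemma~\ref{lem:curvature}.

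In your fold language, at a zero of $S_3'$ one has $S_3''=-\alpha_3^{-2}G'(G'-\alpha_3)^2B'$. The inequality above therefore says exactly that level $\eta_3$ never meets an increasing piece of $B$. That is the step you asserted without proof.
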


\begin{proof}
At every stationary point of $S_3$, the conjugate-branch inequality
$L(d)+U(d)>2$ for $d>0$ gives $S_3''>0$.
Thus every stationary point is a strict local minimum. Two such points
would force an intervening stationary local maximum, a contradiction.
Hence $S_3$ has at most one decreasing interval and at most one downward
crossing. Appendix~\ref{app:branches} gives the derivative calculation.
\end{proof}

\begin{center}
\noindent\textbf{Branch-count summary.}\par\smallskip
\parbox{\columnwidth}{\footnotesize The table records minima of $F$, not
local extrema of the scalar sums themselves. Here ``shared endpoint'' means
$S_0(0-)=S_1(0-)$, and ``opposite end signs'' means that $S_2'$ is negative
near $-\infty$ and positive near zero.}\par\smallskip
\footnotesize
{\setlength{\tabcolsep}{4pt}
\begin{tabular}{@{}lll@{}}
\toprule
Branch(es) & Bound & Reason \\
\midrule
$S_0$ & $m_0\leq1$ & monotonicity \\
$S_0,S_1$ & $m_0+m_1\leq2$ &
 \begin{tabular}[t]{@{}l@{}}fold bound;\\shared endpoint\end{tabular} \\
$S_2$ & $m_2\leq2$ &
 \begin{tabular}[t]{@{}l@{}}fold bound;\\opposite end signs\end{tabular} \\
$S_3$ (acute only) & $m_3\leq1$ & Cor.~\ref{cor:lastbranch}; $S_3''>0$ \\
\bottomrule
\end{tabular}
}
\end{center}

\noindent\textbf{The remaining gap.}
The first three bounds still allow $m_0+m_1+m_2=4$. The sole purpose of
the next section is to rule out this simultaneous extremal case when
$\alpha_2>\alpha_1$.

\section{Comparing the critical heights}
\label{sec:comparison}

\begin{figure*}[t]
 \centering
 \includegraphics[width=\textwidth]{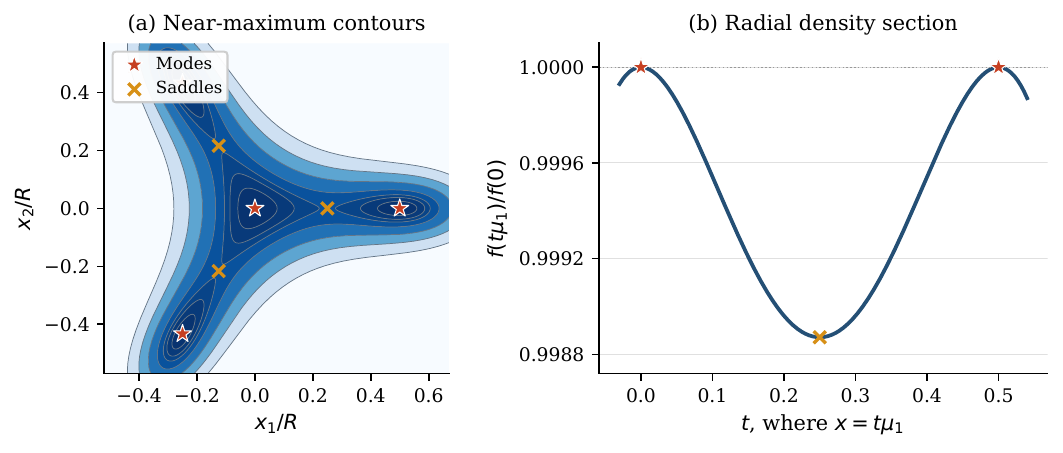}
 \caption{The sharp four-mode configuration with equal weights and
 squared circumradius $R^2=\tfrac83\log2$. The contour plot and a radial
 section illustrate the four strict modes at $0$ and $\mu_i/2$ (stars).
 Crosses indicate the saddle points $\mu_i/4$. Appendix~\ref{app:example}
 gives exact calculations; the figure is illustrative.}
 \label{fig:four}
 \figalt{Three outer modes and one central mode form a triangular arrangement. A radial section shows two equal maxima separated by a shallow saddle.}
\end{figure*}

The table leaves only one hard case: $m_0+m_1=2$ and $m_2=2$ with
$\alpha_2>\alpha_1$. The endpoint argument behind
\eqref{eq:firstgroup} shows that the first equality forces a local maximum
of $S_1$ above one; the second equality forces the last local minimum of
$S_2$ to lie below one. We compare both extrema with one equal-barrier
reference value.

Keep $\alpha_1,\alpha_2,p_3^-$ fixed and move the second barrier from
$\eta_2$ to zero. The resulting equal-barrier branch and its endpoint are
\begin{align}
 S_1^{\mathrm{eq}}(\lambda)
 &=\alpha_1U(-\lambda)+\alpha_2L(-\lambda)+p_3^-(\lambda),\notag\\
 V_0&=S_1^{\mathrm{eq}}(0-)
 =\alpha_1+\alpha_2+p_3^-(0).
 \label{eq:equalbranch}
\end{align}
The reference value $V_0$ need not equal one. We use it only as a comparison
threshold. Among the lower coordinates of $S_1^{\mathrm{eq}}$, the smallest
barrier belonging to a positive coefficient is still unique, so the fold
argument applies.

\begin{lemma}\label{lem:separation}
In the ordered setup \eqref{eq:S1}--\eqref{eq:S2}, suppose
$\alpha_2>\alpha_1$. Every stationary point of $S_1$ has
height below $V_0$. If $S_2$ has two decreasing pieces separated by an increasing
piece, its last local minimum has height above $V_0$.
\end{lemma}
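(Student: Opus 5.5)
The plan is to use the critical-height identity \eqref{eq:criticalheight}. At a stationary point of an upper branch $S_i$, the upper coordinate satisfies $U=G'/(G'-\alpha_i)$. The height there is therefore the function
\[
 H_i(\lambda)=G_i(\lambda)+\alpha_i\frac{G_i'(\lambda)}{G_i'(\lambda)-\alpha_i}
\]
evaluated along the fold curve, and its derivative is $G_i'B_i'$. So $H_i$ is monotone exactly where $B_i$ is, with the same direction, and every stationary height of $S_i$ is a value of $H_i$ at a point where $B_i=\eta_i$. The task then reduces to comparing the values of $H_1$ and $H_2$ with $V_0$ along the relevant monotone pieces of the fold curves. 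The comparison uses the endpoint information \eqref{eq:Bendpoints} and the fact that moving $\eta_2$ to $0$ produces $S_1^{\mathrm{eq}}$.

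\textbf{Stationary points of $S_1$.} I would compare $S_1$ with $S_1^{\mathrm{eq}}$. Raising the barrier $\eta_2$ from $0$ lowers $\alpha_2L(\eta_2-\lambda)$ pointwise, because $L$ is decreasing in its argument. Hence $S_1<S_1^{\mathrm{eq}}$ on $\lambda<0$.

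Next I would show that $S_1^{\mathrm{eq}}$ stays below its endpoint value $V_0$, i.e.\ it is increasing overall near $0$ in a suitable sense. Its fold curve $B^{\mathrm{eq}}$ ends at the common barrier $0$, and the relevant lower coordinate is coordinate $2$ with $\alpha_2>\alpha_1$. By the second assertion of Lemma~\ref{lem:threefolds}, each level $\eta\ge0$ meets $B^{\mathrm{eq}}$ exactly once, on a decreasing piece. By \eqref{eq:criticalheight}, $H^{\mathrm{eq}}$ decreases along that piece toward the terminal value $H^{\mathrm{eq}}(0-)=G(0-)+\alpha_1=V_0$. Here $U_B\to1$, so the upper term tends to $\alpha_1$ and $G\to\alpha_2+p_3^-(0)$.

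Then I would pass from $S_1^{\mathrm{eq}}$ to $S_1$ itself. The function $G$ for $S_1$ depends on $\eta_2$, so I would parametrize the fold family by $\eta_2\in[0,\eta_2]$ and show that the stationary heights $H_1$ decrease in $\eta_2$. The mechanism is that $\partial_{\eta_2}G<0$, and the fold term responds with the same sign. Combining this with the bound at $\eta_2=0$ gives every stationary height of $S_1$ below $V_0$. The cleaner alternative is to prove $H_1(\lambda)<V_0$ for all $\lambda$ in the fold domain with $B_1(\lambda)\ge0$ directly, using the monotonicity from Lemma~\ref{lem:curvature} and the endpoint value at $\eta_2$.

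\textbf{The last minimum of $S_2$.} Here the lower coordinates are $1$ and $3$, and the fold curve $B_2$ ends at $\eta_1=0$ with $K(0-)=\alpha_1<\alpha_2$. The terminal value of $H_2$ is
\[
 H_2(0-)=\alpha_1+p_3^-(0)+\alpha_2=V_0.
\]
Since $\{K>\alpha_2\}$ is an interval ending strictly before $0$, the last monotone piece of $B_2$ is decreasing down to $B_2(0-)=0$.

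If $S_2$ has two decreasing pieces, its level $\eta_2$ meets $B_2$ three times. The last stationary point is then the local minimum lying on this terminal decreasing piece of $B_2$, where $B_2>0$. Along that piece $H_2$ also decreases, by \eqref{eq:criticalheight}, to the terminal value $V_0$. Therefore $H_2>V_0$ at the last minimum.

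\textbf{Main obstacle.} The hardest step is the $S_1$ half: controlling stationary heights when $\eta_2>0$ breaks the equal-barrier structure. It is not immediate that heights can only decrease as $\eta_2$ increases. Stationary points can also merge or appear through folds, although genericity lets me track them. I would first try the direct monotonicity of $H_1$ in $\eta_2$ at fixed fold parameter value $B=\eta_1$. If that fails, I would instead compare pieces of $B_1$ with the increasing/decreasing structure from Lemma~\ref{lem:threefolds}.
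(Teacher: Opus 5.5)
\textbf{Second claim.} Your treatment of the second claim is correct and is essentially the paper's argument. The signs $-,+,-,+$ put the last minimum of $S_2$ on the terminal decreasing piece of $B_2$. Along that piece the stationary height decreases, by \eqref{eq:criticalheight}, to the endpoint value $V_0$.

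\textbf{First claim.} The first claim has a genuine gap. The pointwise inequality $S_1<S_1^{\mathrm{eq}}$ is not enough, because $S_1^{\mathrm{eq}}$ does not stay below $V_0$: it contains $\alpha_1U(-\lambda)\to\infty$ as $\lambda\to-\infty$. You also misread Lemma~\ref{lem:threefolds} for $S_1^{\mathrm{eq}}$. Its fold curve decreases and then \emph{increases} to $0$. So the unique level-zero intersection lies on the initial decreasing piece, while the terminal piece is increasing and the stationary height rises to $V_0$ there. Your step ``$H^{\mathrm{eq}}$ decreases along that piece toward $V_0$'' would place the stationary height \emph{above} $V_0$, the opposite of what you need. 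The lemma actually gives more, once combined with $(S_1^{\mathrm{eq}})'<0$ near $-\infty$ and $(S_1^{\mathrm{eq}})'\sim(\alpha_2-\alpha_1)/\sqrt{-2\lambda}>0$ near $0$. Then $S_1^{\mathrm{eq}}$ has exactly one minimum and afterwards increases strictly to $V_0$. Hence $(S_1^{\mathrm{eq}})'(\lambda)>0$ implies $S_1^{\mathrm{eq}}(\lambda)<V_0$.

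\textbf{Missing idea.} What is missing is a comparison of \emph{derivatives}, not only of values. Since $L(d)/(1-L(d))$ also decreases in $d$, raising the barrier from $0$ to $\eta_2$ lowers the slope of the second lower coordinate. This gives $S_1'<(S_1^{\mathrm{eq}})'$ for $\lambda<0$. At any stationary point of $S_1$ this forces $(S_1^{\mathrm{eq}})'>0$, so the point lies on the increasing tail of $S_1^{\mathrm{eq}}$. There $S_1<S_1^{\mathrm{eq}}<V_0$.

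\textbf{Why your fallbacks fail.} Neither fallback replaces this comparison.
\begin{itemize}
\item \emph{Continuation in $\eta_2$.} The envelope derivative $\partial_{\eta_2}S_1<0$ of a critical value is locally correct. Note, though, that at fixed $\lambda$ the fold term $\alpha_1G'/(G'-\alpha_1)$ moves opposite to $G$, since it decreases in $G'$. The real problem is global: $S_1^{\mathrm{eq}}$ has one stationary point, while $S_1$ can have two. Stationary points therefore appear, in saddle-nodes or emerging from the endpoint $\lambda=0$. You would need a height bound at each birth point, which is the original problem; genericity does not remove this.
\item \emph{The direct route along $B_1$.} The fold curve $B_1$ ends at $\eta_2$, and there the stationary height tends to $\alpha_1+\alpha_2+p_3^-(\eta_2)>V_0$. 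So monotonicity from that endpoint cannot give the bound $V_0$.
\end{itemize}
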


\begin{proof}
For $S_1^{\mathrm{eq}}$, the first lower barrier and the upper barrier both
equal zero, and their inverse coefficients satisfy $\alpha_2>\alpha_1$.
Lemma~\ref{lem:threefolds}, applied to horizontal level zero, gives
exactly one stationary point. Since $(S_1^{\mathrm{eq}})'$ is initially
negative and finally positive, $S_1^{\mathrm{eq}}$ has one minimum and then
increases to $V_0$.
Consequently
 $(S_1^{\mathrm{eq}})'(\lambda)>0$ implies
$S_1^{\mathrm{eq}}(\lambda)<V_0$.
Both $L(d)$ and $L(d)/(1-L(d))$ decrease with $d$, so $\eta_2>0$ gives
\[
 S_1(\lambda)<S_1^{\mathrm{eq}}(\lambda),\qquad
 S_1'(\lambda)<(S_1^{\mathrm{eq}})'(\lambda).
\]
At any stationary point of $S_1$, the second inequality gives
$(S_1^{\mathrm{eq}})'>0$; the first inequality then gives
$S_1(\lambda)<V_0$.

For the second claim, two separated decreasing pieces force the derivative
signs $-,+,-,+$. Thus the last local minimum lies on the final decreasing
piece of the associated fold curve. Along that curve the stationary height
satisfies \eqref{eq:criticalheight}, so it decreases toward its
endpoint value $V_0$. Its value at the last minimum is therefore strictly
larger than $V_0$. Appendix~\ref{app:branches} gives the fold intersections
and endpoint limits in detail.
\end{proof}

If $m_0+m_1=2$, the required local maximum of $S_1$ lies above one;
Lemma~\ref{lem:separation} places it below $V_0$, so $V_0>1$.
If $m_2=2$, the last local minimum of $S_2$ lies below one; the same lemma
places it above $V_0$, so $V_0<1$. This contradiction rules out simultaneous
equality. Together with the case
$\alpha_2<\alpha_1$, this proves
\begin{equation}\label{eq:jointthree}
 m_0+m_1+m_2\leq3.
\end{equation}

For an obtuse triangle these are all branches which can contain a
minimum, proving the bound three. In the acute case,
Corollary~\ref{cor:lastbranch} adds $m_3\leq1$ to
\eqref{eq:jointthree}, proving the bound four.

Appendix~\ref{app:genericity} extends both counts to degenerate
parameters by persistence of isolated strict modes.  Every right triangle
and every collinear three-point configuration is a limit of obtuse
triangles.  This gives the remaining upper bounds in
Theorem~\ref{thm:main}.

\section{Sharpness and dimension reduction}
\label{sec:sharpness}

We finish by showing that both upper bounds are attained and by lifting
the planar result to every ambient dimension.

The classical equilateral configuration attains four. In whitened
coordinates take common covariance $I_2$, equal weights, and squared
circumradius $R^2=\tfrac83\log2$. The center and the three
points $\mu_i/2$ are stationary. Using
the posterior vector $p$ from \eqref{eq:posterior} and
$\nabla^2\log f=\operatorname{Cov}_p(\mu)-I_2$, their Hessian eigenvalues
are $\tfrac43\log2-1$ at the center and
$\tfrac43\log2-1,\tfrac23\log2-1$ at each outer point. All are negative.
Thus the four points are strict modes; their equal density values are
$2^{-4/3}/(2\pi)$. Figure~\ref{fig:four} illustrates the configuration;
Appendix~\ref{app:example} verifies the stationary points, eigenvalues,
heights, and intervening saddles exactly.

For the three-mode lower bounds, again take common covariance $I_2$, choose
three distinct means of the desired geometric class, and scale their
pairwise distances to be at least ten. Take equal weights. On the unit
circle about any one mean, the
density is at most
\[
 \frac{e^{-1/2}+2e^{-81/2}}{6\pi}
 <\frac1{6\pi},
\]
whereas at that mean it is at least $1/(6\pi)$.  The three disjoint unit
balls therefore contain three local maxima.  This works for obtuse,
right, and collinear configurations, proving sharpness in each class.

\begin{proof}[Proof of Corollary~\ref{cor:dimension}]
For the original parameters, set
\[
 y=\Sigma^{-1/2}(x-\mu_3),\qquad
 \nu_i=\Sigma^{-1/2}(\mu_i-\mu_3),
\]
and let
\[
 \widetilde f(y)=\sqrt{\det\Sigma}\,
 f(\mu_3+\Sigma^{1/2}y).
\]
Choose a two-dimensional linear subspace $P$ containing
$\operatorname{span}\{\nu_1,\nu_2\}$. For $y=u+v\in P\oplus P^\perp$,
the transformed density factors as
\[
 \widetilde f(u+v)=\varphi_{I_{P^\perp}}(v)
   \sum_{i=1}^3w_i\varphi_{I_P}(u-\nu_i).
\]
Here $\varphi_{I_P}$ denotes the standard Gaussian density on $P$. The
first factor has its unique mode at $v=0$, so modes of
$\widetilde f$ correspond exactly to modes of the mixture on $P$. The
planar theorem therefore gives the stated bounds, including the geometric
three-mode cases. The planar sharpness examples embed in every $d\geq2$.
\end{proof}

\section{Conclusion and Discussion}
\label{sec:conclusion}

We have established the sharp maximum of four modes for mixtures of three
Gaussian components sharing a positive-definite covariance matrix, in every
dimension $d\geq 2$. The result holds for arbitrary means and positive
weights, including degenerate modes. When the whitened means are collinear
or form a right or obtuse triangle, the sharp maximum is three.

The proof reduces mode counting to scalar branch analysis of a free-energy
functional on the probability simplex, using curvature control and
comparisons of stationary heights.

The broader problem of determining the sharp maximum number of modes for
general multivariate Gaussian mixtures remains open
\citep{amendola2020}. Natural directions for future work are to obtain sharp
bounds for mixtures with more than three components sharing a common
covariance, and for mixtures whose component covariance matrices differ.
Within the present model, characterizing the parameter region in which
four modes occur is another question left open by this work.
  \label{end:main}

\clearpage
\raggedbottom
\subsection*{AI Use Statement}
Generative AI (ChatGPT) was used for mathematical exploration, proposed proofs and simplifications, internal critiques, literature searches, code generation, symbolic-check design, and manuscript drafting. The algebraic verification scripts were executed and the manuscript received multiple AI-assisted internal checks. These checks are not a formal proof certification. No data involving human participants and no learned models were used. The submitting authors have reviewed the content and take responsibility for the manuscript in its entirety.
 
\bibliographystyle{plainnat}

\clearpage
\flushbottom
\appendix
\onecolumn
\section*{Supplementary proofs and exact calculations}
\section{The variational reduction and the minimum criterion}
\label{app:reduction}
The variational argument applies in any ambient dimension $d$; the main
theorem has $d=2$. To make the normalization explicit, start from
\[
 f_{\mathrm{orig}}(x)=\sum_iw_i
 \varphi_\Sigma(x-\mu_i^{\mathrm{orig}})
\]
and set
\[
 y=\Sigma^{-1/2}x,\qquad
 \nu_i=\Sigma^{-1/2}\mu_i^{\mathrm{orig}},\qquad
 \widetilde f(y)=\sqrt{\det\Sigma}\,
 f_{\mathrm{orig}}(\Sigma^{1/2}y).
\]
Then $\widetilde f(y)=\sum_iw_i\varphi_{I_d}(y-\nu_i)$; the weights are
unchanged and modes correspond under $x=\Sigma^{1/2}y$. Throughout this
appendix we relabel $(\widetilde f,y,\nu_i)$ as $(f,x,\mu_i)$. Thus the
common covariance is $I_d$.

This section justifies the reduction at degenerate critical points as well
as at nondegenerate ones. Write
\[
  \mathcal P=\{p\in(0,1)^3:p_1+p_2+p_3=1\},
  \qquad \bar\mu(p)=\sum_{i=1}^3p_i\mu_i.
\]
With $d_{ij}^2=\|\mu_i-\mu_j\|^2$ and
$a_i=(d_{ij}^2+d_{ik}^2-d_{jk}^2)/2$, the identities
$d_{ij}^2=a_i+a_j$ and
\begin{align}
 &\sum_i p_i\|\mu_i\|^2-\|\bar\mu(p)\|^2 \notag\\
 &\hspace{12mm}=\sum_{i<j}p_ip_jd_{ij}^2
   =\sum_i a_ip_i(1-p_i)
 \label{eq:appendix-variance}
\end{align}
hold even if some means coincide. Consequently, the potential can be written
in either form
\begin{align}
 F(p)
 &=\sum_i p_i\log\frac{p_i}{w_i}
   +\frac12\sum_i p_i\|\mu_i\|^2-\frac12\|\bar\mu(p)\|^2
   \notag\\
 &=\sum_i\left[
 p_i\log\frac{p_i}{w_i}+\frac{a_i}{2}p_i(1-p_i)\right].
 \label{eq:appendix-potential}
\end{align}
The first line identifies $F$ as $D_{\mathrm{KL}}(p\Vert w)$ plus half
the total $p$-variance of the whitened component means. Thus $F$ is a
reduced free-energy functional, not the entropy of the mixture density.

\begin{lemma}[Correspondence of local extrema]
\label{lem:appendix-extrema}
The stationary points of $F$ on $\mathcal P$ correspond bijectively to
the stationary points of $f$. The correspondence is
$p\mapsto x=\bar\mu(p)$, with inverse given by the posterior probabilities.
Moreover, $p$ is a local minimum of $F$ if and only if $x$ is a local
maximum of $f$. Neither nondegeneracy nor affine independence of the
means is required.
\end{lemma}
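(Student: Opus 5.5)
The plan is to work with the joint energy $\mathcal E(x,p)$ and its two partial minimizations. Let $\pi(x)$ denote the posterior vector in \eqref{eq:posterior}. Then
\[
 g(x):=\min_{p}\mathcal E(x,p)=\mathcal E(x,\pi(x))=-\log(2\pi f(x))
\]
in the planar normalization; in dimension $d$ the constant is $(2\pi)^{d/2}$, which is irrelevant. By \eqref{eq:profiled-potential},
\[
 F(p)=\min_x\mathcal E(x,p)=\mathcal E(\bar\mu(p),p).
\]
Both inner minimizers are unique, and both depend smoothly (indeed analytically) on the outer variable. The first uses the strict convexity of $p\mapsto D_{\mathrm{KL}}(p\Vert w)+\text{linear}$ on the open simplex; the second is a quadratic in $x$ with identity Hessian. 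Modes of $f$ are local minima of $g$, so the lemma reduces to comparing local minima of the two marginal functions $g$ and $F$ of the single function $\mathcal E$ on $\mathbb R^d\times\mathcal P$.

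\textbf{Stationary points.} By the envelope identity, $\nabla g(x)=\partial_x\mathcal E(x,\pi(x))=x-\bar\mu(\pi(x))$. Similarly, the constrained gradient of $F$ at $p$ equals $\partial_p\mathcal E(\bar\mu(p),p)$ projected onto the tangent space $\{v:\sum v_i=0\}$. This projection vanishes exactly when $p=\pi(\bar\mu(p))$, because the stationarity condition for the inner $p$-problem is precisely the Gibbs/softmax equation. It follows that $x$ is stationary for $g$ iff $x=\bar\mu(\pi(x))$, and that $p$ is stationary for $F$ iff $p=\pi(\bar\mu(p))$. The maps $x\mapsto\pi(x)$ and $p\mapsto\bar\mu(p)$ are mutually inverse on these fixed-point sets, which gives the bijection. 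Affine dependence of the means causes no trouble: $\bar\mu$ need not be injective on $\mathcal P$, but it is injective on fixed points, since $p=\pi(\bar\mu(p))$ recovers $p$.

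\textbf{Local type.} I would use the following general fact, which needs no Hessian nondegeneracy. Let $\mathcal E$ be continuous, and suppose the partial minimizers $\pi(x)$ and $\xi(p)=\bar\mu(p)$ are unique and continuous. Let $(x_*,p_*)$ be a point with $p_*=\pi(x_*)$ and $x_*=\xi(p_*)$. Then the following three statements are equivalent:
\begin{itemize}
\item[(i)] $x_*$ is a local minimum of $g$;
\item[(ii)] $(x_*,p_*)$ is a local minimum of $\mathcal E$;
\item[(iii)] $p_*$ is a local minimum of $F$.
\end{itemize}
To prove (i)$\Rightarrow$(ii), take $(x,p)$ near $(x_*,p_*)$; then $\mathcal E(x,p)\ge g(x)\ge g(x_*)=\mathcal E(x_*,p_*)$. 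For (ii)$\Rightarrow$(i), take $x$ near $x_*$. Continuity of $\pi$ puts $(x,\pi(x))$ in the neighbourhood where (ii) holds, so $g(x)=\mathcal E(x,\pi(x))\ge\mathcal E(x_*,p_*)=g(x_*)$. The equivalence (ii)$\Leftrightarrow$(iii) follows by the symmetric argument, using continuity of $\bar\mu$. Chaining these gives: $p$ is a local minimum of $F$ iff $\bar\mu(p)$ is a local maximum of $f$. The argument never uses second derivatives, so degenerate critical points and coincident or collinear means are covered automatically.

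\textbf{Main obstacle.} The step needing care is the continuity and uniqueness of $\pi$ and $\bar\mu$ as minimizers, in particular that the $p$-minimizer stays in the open simplex so that neighbourhoods match up. For this I would note that $\mathcal E(x,\cdot)$ is strictly convex on the closed simplex. Its derivative has a $\log p_i\to-\infty$ barrier at the boundary, so the minimizer is interior, and it is given in closed form by \eqref{eq:posterior}, which is plainly continuous in $x$. I would also check that the Gibbs identity indeed gives $g=-\log(2\pi f)$. Evaluating $\mathcal E$ at the softmax vector yields $-\log\sum_i w_ie^{-\|x-\mu_i\|^2/2}$, and this matches $-\log(2\pi f)$ up to the dimension-dependent additive constant.
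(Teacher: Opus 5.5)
Your proposal is correct and follows essentially the paper's proof: the same joint energy $\mathcal E$ with its two profiled functions, the fixed-point relations $p=\pi(\bar\mu(p))$ and $x=\bar\mu(\pi(x))$ for the bijection, and a value comparison that uses only continuity of the posterior and of $\bar\mu$, so degenerate points are covered. The differences are cosmetic: you route the local-minimum equivalence through a joint local minimum of $\mathcal E$ and obtain the stationarity condition from the envelope identity, whereas the paper compares $\phi$ and $F$ directly and writes out the constrained stationarity equations; the inequalities, once composed, are the same.
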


\begin{proof}
Put $\widetilde w_i=w_i\exp(-\|\mu_i\|^2/2)$ and define
\begin{align*}
 \mathcal E(x,p)
 &=\sum_i p_i\log\frac{p_i}{\widetilde w_i}
   +\frac12\|x\|^2-x\cdot\bar\mu(p)\\
 &=F(p)+\frac12\|x-\bar\mu(p)\|^2.
\end{align*}
Hence $F(p)=\min_{x\in\mathbb R^d}\mathcal E(x,p)$: the location variable has
been profiled out.
For fixed $x$, its unique minimizer over $\mathcal P$ is
\[
 p_i(x)=
 \frac{\widetilde w_i\exp(x\cdot\mu_i)}
 {\sum_j\widetilde w_j\exp(x\cdot\mu_j)}.
\]
Indeed, subtracting the minimum value leaves the relative entropy
$\sum_i p_i\log(p_i/p_i(x))$. Therefore
\begin{equation}
 \phi(x):=-\log((2\pi)^{d/2}f(x))
 =\min_{p\in\mathcal P}\mathcal E(x,p).
 \label{eq:appendix-variational}
\end{equation}
This is the finite-state Gibbs variational identity. Its maximization form
is the free-energy formulation of EM \citep{nealhinton1998}; for Gaussian
mean shift it equals $-\mathcal E$, up to the Gaussian normalization constant
\citep{carreira2006}. The posterior and mean updates give the associated
Gaussian mean-shift EM iteration \citep{carreira2007}.
Differentiating in $x$ gives
$\nabla\phi(x)=x-\bar\mu(p(x))$.
On the other hand, constrained stationarity of $F$ gives
\[
 \log(p_i/\widetilde w_i)-\mu_i\cdot\bar\mu(p)=\text{constant}.
\]
Thus a stationary $p$ satisfies $p=p(\bar\mu(p))$, and a stationary $x$
satisfies $x=\bar\mu(p(x))$. These two relations establish the stated bijection.

Suppose first that $p_*$ is a local minimum of $F$, and let
$x_*=\bar\mu(p_*)$. The posterior is continuous and $p(x_*)=p_*$, so for $x$
sufficiently close to $x_*$,
\begin{align*}
 \phi(x)
 &=F(p(x))+\frac12\|x-\bar\mu(p(x))\|^2\\
 &\ge F(p_*)=\phi(x_*).
\end{align*}
Hence $x_*$ is a local maximum of $f$. Conversely, if $x_*$ is a local
minimum of $\phi$ and $p_*=p(x_*)$, then $\bar\mu(p_*)=x_*$. For all $p$
sufficiently close to $p_*$, continuity of $\bar\mu$ and
\eqref{eq:appendix-variational} yield
\[
 F(p)=\mathcal E(\bar\mu(p),p)
 \ge\phi(\bar\mu(p))\ge\phi(x_*)=F(p_*).
\]
This proves the converse.
\end{proof}

For the whitened density and means just defined, direct differentiation
also yields
\[
 \nabla^2\log f(x)=\operatorname{Cov}_{p(x)}(\mu)-I,
\]
where
$\operatorname{Cov}_{p}(\mu)
=\sum_i p_i(\mu_i-\bar\mu(p))(\mu_i-\bar\mu(p))^{\mathsf T}$.
The variational argument above is stronger than a comparison of these
Hessians, since it also covers a singular Hessian.

\begin{lemma}[The scalar minimum criterion]
\label{lem:appendix-minimum}
Consider a stationary point for which all
$h_i=1/p_i-a_i$ are nonzero. If all $h_i>0$, it is a strict local
minimum of $F$. If at least two $h_i<0$, it is not a local minimum.
If exactly one $h_i<0$, then its constrained Hessian is positive
definite if and only if $S_i'(\lambda)<0$, where $S_i$ is the branch
sum with coordinate $i$ upper and the other coordinates lower.
\end{lemma}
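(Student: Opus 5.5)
The Hessian of $F$ on the tangent space $\{v:\sum_iv_i=0\}$ of $\mathcal P$ is the diagonal form $Q(v)=\sum_i h_iv_i^2$. Indeed, differentiating \eqref{eq:potential} twice gives $\partial_i^2F=1/p_i-a_i$, and there are no mixed terms. By \eqref{eq:hessian}, $h_i=1/p_i'$ away from junctions. The plan is to classify a diagonal quadratic form restricted to the hyperplane $\sum v_i=0$. Since all $h_i\neq0$, this is linear algebra on a two-dimensional space. The only analytic input is identifying the sign of the resulting determinant-type quantity with the sign of $S_i'$ through \eqref{eq:sumderivative}.

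\textbf{Case all $h_i>0$.} Here $Q$ is positive definite on all of $\mathbb R^3$, hence on the tangent plane. A stationary point with positive definite constrained Hessian is a strict local minimum, by the second-order sufficient condition in the chart $(p_1,p_2)\mapsto(p_1,p_2,1-p_1-p_2)$.

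\textbf{Case at least two $h_i<0$.} Say $h_j,h_k<0$. Take $v=e_j-e_k$, which lies in the tangent plane. Then $Q(v)=h_j+h_k<0$. So $F$ decreases to second order along this direction, and the point is not a local minimum; no degeneracy issue arises because the second derivative along this line is strictly negative.

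\textbf{Case exactly one $h_i<0$.} This is the main step. Use the constraint to eliminate $v_i=-\sum_{j\ne i}v_j$, which gives
\[
 Q=\sum_{j\ne i}h_jv_j^2+h_i\Bigl(\sum_{j\ne i}v_j\Bigr)^2 .
\]
This is $D+h_i\,\mathbf 1\mathbf 1^{\mathsf T}$ with $D=\operatorname{diag}(h_j)_{j\ne i}$ positive definite. A rank-one perturbation of a positive definite matrix is positive definite if and only if its determinant is positive. By the matrix determinant lemma,
\[
 \det\bigl(D+h_i\mathbf 1\mathbf 1^{\mathsf T}\bigr)=\det D\,\Bigl(1+h_i\sum_{j\ne i}h_j^{-1}\Bigr).
\]
Since $h_i<0$, the bracket equals $h_i\bigl(h_i^{-1}+\sum_{j\ne i}h_j^{-1}\bigr)=h_i\sum_j p_j'$. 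This is positive exactly when $\sum_jp_j'<0$.

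It remains to identify $\sum_jp_j'$ with $S_i'$. The point lies on the branch with coordinate $i$ upper, because $h_i<0$ means $p_i>\alpha_i$. The others are lower, because $h_j>0$ means $p_j<\alpha_j$ when $a_j>0$, and $a_j\le0$ forces $h_j>0$ anyway. Each coordinate solution is differentiable in $\lambda$ with derivative $1/h_j$, by the implicit function theorem applied to \eqref{eq:stationary}, since $h_j\ne0$ excludes the junction. Hence $S_i'(\lambda)=\sum_jp_j'$ by \eqref{eq:sumderivative}, and positive definiteness of $Q$ is equivalent to $S_i'<0$.

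The main care point is this branch identification. One must confirm that $h_i<0$ forces $a_i>0$, which holds because $1/p_i>0$, so that $S_i$ is defined. One must also check that the stationary $\lambda$ equals the branch parameter, which it does by construction of \eqref{eq:branches}.
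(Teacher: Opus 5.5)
Your proof is correct and follows the paper's argument. It uses the same diagonal constrained Hessian $Q(v)=\sum_i h_iv_i^2$, the same test vector $e_j-e_k$ when two $h$'s are negative, and the same elimination of $v_i$, which leads to the determinant $h_ih_jh_k\,S_i'$. The differences are cosmetic: you obtain that determinant from the matrix determinant lemma and a rank-one-perturbation fact, where the paper computes the $2\times2$ determinant directly and notes that direction $(1,-1)$ gives a positive value; you also spell out that $h_i<0$ forces $a_i>0$ and places the point on $S_i$, which the paper leaves implicit.
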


\begin{proof}
The constrained Hessian is
\[
 Q(v)=\sum_i h_iv_i^2,\qquad \sum_i v_i=0.
\]
Positivity when every $h_i>0$ is immediate. If $h_i,h_j<0$, the tangent
vector with $v_i=1$, $v_j=-1$, and the remaining entry zero gives
$Q(v)<0$.

Suppose now that only $h_i$ is negative, and call the other indices
$j,k$. Eliminate $v_i=-v_j-v_k$. In the independent coordinates
$(v_j,v_k)$, the quadratic form is
\begin{align*}
 Q(v)&=h_i(v_j+v_k)^2+h_jv_j^2+h_kv_k^2\\
 &=(h_i+h_j)v_j^2+2h_iv_jv_k+(h_i+h_k)v_k^2.
\end{align*}
Its symmetric matrix and determinant are
\begin{align}
 M&=\begin{pmatrix}h_i+h_j&h_i\\h_i&h_i+h_k\end{pmatrix},\notag\\
 \det M&=(h_i+h_j)(h_i+h_k)-h_i^2\notag\\
 &=h_ih_j+h_ih_k+h_jh_k\notag\\
 &=h_ih_jh_k\left(\frac1{h_i}+\frac1{h_j}+\frac1{h_k}\right)
 =h_ih_jh_k S_i'.
 \label{eq:appendix-hessian-determinant}
\end{align}
In direction $(1,-1)$ the value is $h_j+h_k>0$, so $M$ has at least
one positive eigenvalue. Its two eigenvalues are both positive exactly
when their product, $\det M$, is positive. Because $h_ih_jh_k<0$,
this is equivalent to $S_i'<0$.
\end{proof}

Thus, at parameters with nondegenerate stationary points and no $h_i=0$,
the all-lower branch contributes at most one minimum, and every other
minimum is a downward crossing of an upper-branch sum through one.

\section{Finiteness, generic parameters, and limiting configurations}
\label{app:genericity}

\begin{lemma}[Known finiteness theorem]
\label{lem:appendix-finite}
For any three means and positive weights, the stationary points of $F$
and of $f$ are finite, including tied barriers, right or collinear
configurations, and repeated means.
\end{lemma}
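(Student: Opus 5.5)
By Lemma~\ref{lem:appendix-extrema}, stationary points of $f$ and of $F$ correspond bijectively under $p\mapsto\bar\mu(p)$. It therefore suffices to prove finiteness for either object. The paper says it uses only \citet[Theorem~1]{okuno2026}, so the plan is to invoke that theorem for $f$ and transfer finiteness to $F$ through the bijection. The work consists of checking that the hypotheses of that theorem cover every degenerate case listed, and otherwise supplying a short self-contained argument.

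For a self-contained argument I would work with $F$ directly. After whitening, the stationary equations \eqref{eq:stationary} are real-analytic in $(p,\lambda)$ on $\mathcal P\times\mathbb R$. The stationary set of $F$ is then the zero set of the analytic gradient field $\nabla_{\mathcal P}F$ on the open simplex, so it is a real-analytic subvariety of $\mathcal P$. Next I would show this set is compact. Near the boundary $p_i\to0$, the term $\log p_i$ dominates the bounded quadratic part. The equation with index $i$ then forces $\lambda\to-\infty$, while some other coordinate stays bounded below, which makes its equation impossible. So stationary points stay in a compact subset of $\mathcal P$. A compact analytic set is finite unless it contains a positive-dimensional analytic arc.

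The main obstacle is excluding such an arc. I would argue that $F$ is constant along any connected arc of critical points. Along it, $\lambda$ satisfies every coordinate equation simultaneously. Each coordinate $p_i$ is a branch of the inverse of the strictly monotone-on-branches map $p_i\mapsto\log(p_i/w_i)-a_ip_i$, so each $p_i$ is locally a function of $\lambda$. If $\lambda$ were locally constant along the arc, every $p_i$ would be constant, contradicting positive dimension. Hence $\lambda$ varies, and the arc is locally a piece of one branch curve $\lambda\mapsto(p_1^\pm,p_2^\pm,p_3^\pm)(\lambda)$ along which $S(\lambda)\equiv1$ on an interval.

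By analyticity, $S\equiv1$ would then hold on the branch's entire domain. This contradicts the endpoint behaviour: $S_0\to0$ as $\lambda\to-\infty$, and $S_i$ tends to $\alpha_i$ or to infinity at its ends. A branch junction, where $h_i=0$, needs separate care. There one reparametrizes by $p_i$ instead of $\lambda$, which is analytic in a neighbourhood, and runs the same argument. Right triangles ($a_i=0$), collinear means ($a_i<0$ allowed) and repeated means ($d_{ij}=0$, so some $a_i$ may vanish) change only the number of branches, not this analyticity argument. Tied barriers only make two branches share their junction value, which the reparametrization also handles.
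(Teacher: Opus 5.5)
Your first paragraph is exactly the paper's proof. The paper cites \citet[Theorem~1]{okuno2026}, which bounds the critical points of the whitened $f$ by fifteen, with degenerate points and repeated or collinear centers included. It then transfers finiteness to $F$ through Lemma~\ref{lem:appendix-extrema}.

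Your fallback is a genuinely different, self-contained route, and in substance it works. The critical set is compact in $\mathcal P$, because some $p_j\ge 1/3$ bounds $\lambda$ below and so keeps every $p_i$ away from $0$. Curve selection at an accumulation point gives an analytic arc. Along it $\lambda$ is nonconstant, since each coordinate equation has at most two roots for fixed $\lambda$. The identity theorem then makes a branch sum identically one on its whole connected domain.

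Two repairs are needed. First, finiteness concerns all stationary points, not only candidate minima, so you must cover every sign pattern, including those with two or three upper coordinates. For example, with equal weights, the center of an equilateral configuration of squared side above $6$ has all $h_i<0$. This is harmless: the all-lower sum tends to $0$, and any sum containing an upper coordinate tends to $+\infty$ as $\lambda\to-\infty$, so the left end alone gives the contradiction. Your remark that $S_i$ tends to $\alpha_i$ at an end is not correct, but it is also not needed.

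Second, the junction reparametrization by $p_i$ is unnecessary, and with tied barriers it does not give a single analytic graph. Along a nonconstant analytic arc each $h_i$ vanishes only at isolated parameters: if $h_i\equiv0$, then $p_i$, and hence $\lambda$, would be constant. So you can shrink the arc to avoid all junctions, which also disposes of tied barriers. Your claim that $F$ is constant along an arc of critical points is never used.

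As for what each route buys: the citation is shorter and gives an explicit count, but it is the paper's only external dependence. Your argument uses just the separable structure of $F$ and standard real-analytic geometry. It never looks at the signs of the $a_i$, so it treats right, collinear and repeated-mean configurations uniformly. It gives finiteness without a numerical bound, which is all the paper uses.
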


\begin{proof}
After whitening, \citet[Theorem~1]{okuno2026} gives at most fifteen
critical points of $f$, including degenerate points and repeated or
collinear centers. Lemma~\ref{lem:appendix-extrema} transfers this
finiteness to $F$. Only finiteness, not the numerical bound fifteen,
is used below.
\end{proof}

\begin{lemma}[Persistence of modes]
\label{lem:appendix-persistence}
Every mode is isolated and strict. Given a mixture with $k$ modes,
every sufficiently small perturbation of its means and positive weights
has at least $k$ modes.
\end{lemma}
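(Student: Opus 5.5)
The plan has two parts: first show that every mode is an isolated, strict local maximum, then use that to show modes survive perturbation.

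\emph{Isolation and strictness.} By Lemma~\ref{lem:appendix-finite}, $f$ has only finitely many stationary points. Every mode is stationary, so each mode $x_*$ has a neighbourhood containing no other stationary point. Suppose $x_*$ were a non-strict local maximum. Then there would be points $x_n\to x_*$, $x_n\neq x_*$, with $f(x_n)=f(x_*)$. Take a small closed ball $\bar B$ around $x_*$ on which $f\le f(x_*)$ and which contains no other stationary point. Each $x_n$ lies in the interior of $\bar B$ for large $n$. Since $f(x_n)$ equals the maximum of $f$ over $\bar B$, each $x_n$ is an interior maximizer and hence stationary, which is a contradiction. So every mode is strict and isolated. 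Analyticity of $f$ is not even needed for this; finiteness alone suffices.

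\emph{Persistence.} Let $x_1,\dots,x_k$ be the modes and choose pairwise disjoint closed balls $\bar B_\ell=\bar B(x_\ell,r)$. Strictness lets us shrink $r$ so that
\[
 f(x_\ell)>\max_{\partial B_\ell} f \qquad \text{for each } \ell .
\]
Set $\varepsilon=\min_\ell\bigl(f(x_\ell)-\max_{\partial B_\ell}f\bigr)>0$. The map $(x,\mu,w)\mapsto\sum_i w_i\varphi_{I}(x-\mu_i)$ is jointly continuous, so it is uniformly continuous on the compact set $\bigcup_\ell\bar B_\ell$ times a compact parameter neighbourhood. Hence for parameters close enough to the original ones, the perturbed density $g$ satisfies $|g-f|<\varepsilon/3$ on $\bigcup_\ell\bar B_\ell$. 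Then
\[
 g(x_\ell)>f(x_\ell)-\varepsilon/3>\max_{\partial B_\ell}f+\varepsilon/3>\max_{\partial B_\ell} g .
\]
So the maximum of $g$ over the compact ball $\bar B_\ell$ is attained at an interior point, which is a local maximum of $g$. The balls are disjoint, so $g$ has at least $k$ distinct modes.

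Whitening needs one remark. If $\Sigma$ is fixed, perturbing the original means and weights is the same as perturbing the whitened ones. Perturbing the weights while keeping $\sum w_i=1$ is also harmless, since the continuity argument applies verbatim to positive weights near the original ones.

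\emph{Main obstacle.} The only delicate point is strictness at degenerate modes, where Hessian arguments are unavailable. The argument above handles it through finiteness: a non-strict maximum would force a sequence of nearby maximizers, each of which is stationary. I would double-check that Lemma~\ref{lem:appendix-finite} is applied to the whitened $f$ on $\mathbb R^2$ for these exact parameters, including degenerate configurations, which it explicitly covers. Everything else is routine compactness and continuity.
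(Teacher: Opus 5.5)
Your proposal is correct and follows the paper's own route. Strictness comes from the finiteness of stationary points in Lemma~\ref{lem:appendix-finite}: a non-strict maximum would produce nearby equal-valued interior maximizers, each itself stationary. Persistence comes from disjoint closed balls whose positive boundary gap survives uniformly small perturbations, which forces an interior maximum of the perturbed density in each ball; you simply spell out the compactness and uniform-continuity details that the paper states tersely.
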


\begin{proof}
If a local maximum were not strict, there would be equal-valued points
arbitrarily close to it inside a neighborhood on which its value is
maximal. Each such point would itself be a local maximum and hence a
stationary point. This contradicts Lemma~\ref{lem:appendix-finite}.

Choose disjoint small closed balls about the finitely many modes. On
each boundary, the density is strictly smaller than at the corresponding
center. Compactness makes each gap positive. The mixture density depends
continuously, uniformly on the union of these balls, on the means and
weights. All boundary gaps therefore persist under sufficiently small
perturbations. On each ball the perturbed density attains its maximum
in the interior, giving at least one mode in each ball.
\end{proof}

We next justify every generic restriction used in the branch count.
Fix the coefficients $a_i$ and use $p_1,p_2$ as coordinates on
$\Omega=\{p_1>0,p_2>0,p_1+p_2<1\}$, with $p_3=1-p_1-p_2$.
Set $b_i=\log w_i-a_i/2$ and let
\[
 F_a(p)=\sum_i\left(p_i\log p_i-\frac{a_i}{2}p_i^2\right),
 \qquad c=(b_1-b_3,b_2-b_3).
\]
Then $F=F_a-c\cdot(p_1,p_2)-b_3$, so stationarity is equivalent to
\[
 \nabla F_a(p_1,p_2)=c.
\]
Sard's theorem implies that almost every $c\in\mathbb R^2$ is a regular
value of $\nabla F_a$. At every stationary point for such a field, the
constrained Hessian is nonsingular. The vector $c$ varies freely over
$\mathbb R^2$ as the two log-weight ratios vary.

The additional exclusions also have measure zero in field space.
If $a_i>0$, the condition $h_i=0$ is the line $p_i=1/a_i$ intersected
with $\Omega$; otherwise it is empty. The image of each such line under
the smooth map $\nabla F_a$ has planar measure zero, as seen by exhausting
the line by countably many compact segments. Equalities between two
positive-coordinate barriers are affine lines in $c$ and can likewise
be avoided. Hence arbitrarily small weight perturbations make every
stationary point nondegenerate, make all relevant $h_i$ nonzero, and
make the positive-coordinate barriers distinct.

Finally, within the open set of acute triangles, or within the open set
of obtuse triangles, an arbitrarily small perturbation of the vertices
makes the positive coefficients $a_i$, and thus their inverses
$\alpha_i$, pairwise distinct. Indeed, equalities $a_i=a_j$ are the
corresponding equal-squared-distance equations and have empty interior.
After this geometry perturbation, choose a nearby generic field as above.
Lemma~\ref{lem:appendix-persistence} shows that any mode bound proved for
these generic parameters holds for every acute or obtuse configuration.
It also covers stationary points lying at a branch endpoint and modes
with a degenerate Hessian.

To pass to the remaining geometries, a right triangle is a limit of
obtuse triangles. A collinear triple with three distinct means has a
middle vertex of angle $\pi$; moving this vertex a sufficiently small
distance off the line gives an obtuse triangle. Every collinear triple
with repeated means is itself a limit of triples with three distinct
collinear means, and hence a limit of obtuse triangles as well.
Keep the weights fixed in these approximations. Finiteness and strictness
hold also at the limiting configuration, so
Lemma~\ref{lem:appendix-persistence} transfers the obtuse upper bound
to every right or collinear configuration.
 \section{Scalar derivatives and the fold curve, step by step}
\label{app:branches}

This appendix gives the one-variable calculations behind the branch
derivatives, the fold curve, and the final comparison.

\subsection{Inverse branches and a common differential equation}

For $\psi(t)=t-1-\log t$,
\[
 \psi'(t)=1-\frac1t=\frac{t-1}{t}.
\]
Implicit differentiation of $\psi(t(d))=d$ gives
\begin{equation}
 \frac{dt}{dd}=\frac{t}{t-1}.
 \label{eq:app-inverse-derivative}
\end{equation}
Thus $L_d=-L/(1-L)<0$ and $U_d=U/(U-1)>0$.
If $d=\eta-\lambda$, the chain rule contributes a minus sign:
\begin{align}
 \frac{d}{d\lambda}[\alpha L(\eta-\lambda)]
  &=\frac{\alpha L}{1-L}>0,\notag\\
 \frac{d}{d\lambda}[\alpha U(\eta-\lambda)]
  &=-\frac{\alpha U}{U-1}<0.
 \label{eq:app-branch-derivatives}
\end{align}
One differential system handles every coordinate: both signs of $a$
and both inverse branches. Away from $1-ap=0$, differentiate the coordinate
equation \eqref{eq:stationary} and define
\[
 z=\frac1{1-ap},\qquad \rho=p'.
\]
Differentiating that equation and then $z$ gives
\begin{align*}
 (1/p-a)p'&=1,\\
 \rho=p'&=\frac{p}{1-ap}=pz,\\
 z'&=\frac{a p'}{(1-ap)^2}
     =apz^3=(1-z^{-1})z^3=z^2(z-1),\\
 \rho'&=(pz)'=p'z+pz'\\
   &=pz^2+pz^2(z-1)=pz^3=\rho z^2.
\end{align*}
These identities do not assume that $z$ or $\rho$ is positive. On a lower
branch, $z,\rho>0$ and hence $\rho'=\rho z^2>0$. On an upper branch,
$z,\rho<0$;
the same equations remain valid. We will reuse them for the last upper
branch below.

For $a\ne0$, the identity $ap=1-z^{-1}$ also gives
$\rho=(z-1)/a$.
In particular, for lower coordinates,
\[
 \rho=\begin{cases}
 \alpha(z-1),&a>0,\quad\alpha=1/a,\quad z>1,\\
 \gamma(1-z),&a=-c<0,\quad\gamma=1/c,\quad 0<z<1.
 \end{cases}
\]
Even $a=0$ satisfies the differential system: $z=1$, $p'=p$, and
$\rho'=\rho$. The generic branch count excludes that case only temporarily.

For an obtuse triangle $a_1,a_2>0$ and $a_3=-c<0$. The geometric
constraint becomes
\begin{align*}
 0&<a_1a_2-c(a_1+a_2),\\
 \frac1c&>\frac1{a_1}+\frac1{a_2},\\
 \gamma&>\alpha_1+\alpha_2.
\end{align*}
Consequently, if $i$ is upper and $j$ is the positive lower coordinate,
$\alpha_i<\gamma-\alpha_j$. This is the cap used in the proof of
Lemma~\ref{lem:curvature}.

\subsection{Solving for a zero of the branch derivative}

Fix an upper coordinate $i$, and let $G$ be the sum of its two lower
coordinates. Writing $U=U(\eta_i-\lambda)$,
\eqref{eq:app-branch-derivatives} gives
\[
 S_i'=G'-\alpha_i\frac{U}{U-1}.
\]
If $G'\le\alpha_i$, then $U/(U-1)>1$ implies $S_i'<0$. If
$G'>\alpha_i$,
\begin{align*}
 S_i'=0
 &\ \Longleftrightarrow\ G'(U-1)=\alpha_i U\\
 &\ \Longleftrightarrow\ U=\frac{G'}{G'-\alpha_i}>1\\
 &\ \Longleftrightarrow\ \eta_i-\lambda
      =\psi\!\left(\frac{G'}{G'-\alpha_i}\right).
\end{align*}
Therefore a derivative zero is equivalent to $\eta_i=B(\lambda)$, with
\[
 B(\lambda)=\lambda+\psi(U_B(\lambda)),
 \qquad U_B(\lambda)=\frac{G'}{G'-\alpha_i}.
\]
Since $U_B>1$ and $\psi$ has its strict minimum $\psi(1)=0$, we have
$B(\lambda)>\lambda$. Thus every solution of $\eta_i=B(\lambda)$ lies
in the physical upper-branch domain $\lambda<\eta_i$.

Differentiate the two factors separately:
\begin{align*}
 U_B'&=-\frac{\alpha_iG''}{(G'-\alpha_i)^2},\\
 \psi'(U_B)&=1-\frac1{U_B}=\frac{\alpha_i}{G'}.
\end{align*}
It follows that
\begin{align}
 B'&=1+\psi'(U_B)U_B'\notag\\
   &=1-\frac{\alpha_i^2G''}{G'(G'-\alpha_i)^2},
 \label{eq:app-fold-sign}
\end{align}
where $K=G'/(1+\sqrt{G''/G'})$. Since all displayed factors are positive,
\begin{align*}
 B'>0
 &\ \Longleftrightarrow\ G'-\alpha_i>
      \alpha_i\sqrt{G''/G'}\\
 &\ \Longleftrightarrow\ K>\alpha_i.
\end{align*}
The same equivalences hold with equality or with the inequalities
reversed. Thus $\operatorname{sign}B'=\operatorname{sign}(K-\alpha_i)$.

Since $\rho'=\rho z^2>0$ for both lower coordinates, $G'$ is strictly
increasing. It tends to zero as $\lambda\to-\infty$ and to infinity at
the first barrier among positive-coefficient lower coordinates. Thus the fold domain
$G'>\alpha_i$ is an interval. As $G'\downarrow\alpha_i$, one has
$U_B\to\infty$, $B\to\infty$, and $K<\alpha_i$.

If lower coordinate $j$ has the unique first positive barrier, its $z$
tends to infinity while the other lower coordinate remains regular.
Therefore
\begin{align*}
 G'&\sim\alpha_j z,&
 G''&\sim\alpha_j z^3,&
 \sqrt{G''/G'}&\sim z,\\
 K&\longrightarrow\alpha_j,&
 U_B&\longrightarrow1,&
 B&\longrightarrow\eta_j.
\end{align*}
These are the endpoint statements used in Section~\ref{sec:folds}.
The temporary uniqueness assumption is part of the generic argument;
Appendix~\ref{app:genericity} removes it for the theorem.

One endpoint observation includes the horizontal level zero. If
$\eta_j=0$ and $\alpha_j>\alpha_i$, the interval
$\{K>\alpha_i\}$ is terminal, because $K(0-)=\alpha_j>\alpha_i$.
Thus $B$ first decreases and
then increases to zero. Its final increasing piece lies strictly below
zero, while its initial decreasing piece starts at $+\infty$.
Consequently every nonnegative horizontal level, including zero, meets
$B$ exactly once. This is the endpoint case of
Lemma~\ref{lem:threefolds} used below.

\subsection{Critical heights along the fold curve}

The fold curve records where a branch sum is stationary. Its height at
those stationary points is controlled by the same derivative $B'$.
At $\eta_i=B(\lambda)$ we already know that
$U=U_B=G'/(G'-\alpha_i)$, so the stationary height is
$G+\alpha_iG'/(G'-\alpha_i)$. Direct differentiation yields
\begin{equation}
 \frac{d}{d\lambda}\left[G+\alpha_i\frac{G'}{G'-\alpha_i}\right]
 =G'-\frac{\alpha_i^2G''}{(G'-\alpha_i)^2}=G'B'.
 \label{eq:app-critical-height}
\end{equation}
Since $G'>0$, the stationary height increases or decreases on precisely
the same pieces as $B$. Isolated zeros of $B'$ do not affect
strict monotonicity. For example, on a decreasing piece and for $x<y$,
\[
 \int_x^y G'(\lambda)B'(\lambda)\,d\lambda<0.
\]
Here $B'\leq0$, it is not identically zero on any subinterval, and $G'>0$.
No simplicity assumption on the fold's stationary points is needed.

\subsection{The two height comparisons}

Suppose $\alpha_2>\alpha_1$, and use the equal-barrier branch
$S_1^{\mathrm{eq}}$ and endpoint $V_0$ from \eqref{eq:equalbranch}.
For this branch, the upper barrier and the first lower barrier are both zero.
Their inverse coefficients satisfy $\alpha_1<\alpha_2$.
The nonnegative-level observation above therefore gives exactly one
stationary point. Its derivative is negative near minus infinity
and positive near zero. For the latter sign,
$\psi(1+\varepsilon)=\varepsilon^2/2+O(\varepsilon^3)$
gives the branch derivative asymptotics
\[
 (S_1^{\mathrm{eq}})'(\lambda)
 =\frac{\alpha_2-\alpha_1}{\sqrt{-2\lambda}}+O(1)>0
 \quad(\lambda\uparrow0),
\]
since $(p_3^-)'$ stays bounded. Thus its stationary point is a minimum.
After it, $S_1^{\mathrm{eq}}$ increases strictly to $V_0$. In particular,
\[
 (S_1^{\mathrm{eq}})'(\lambda)>0
 \quad\Longrightarrow\quad S_1^{\mathrm{eq}}(\lambda)<V_0.
\]
For $\eta_2>0$, monotonicity of the lower branch gives
\begin{align*}
 L(\eta_2-\lambda)&<L(-\lambda),\\
 \frac{L(\eta_2-\lambda)}{1-L(\eta_2-\lambda)}
   &<\frac{L(-\lambda)}{1-L(-\lambda)}.
\end{align*}
Consequently
\[
 S_1(\lambda)<S_1^{\mathrm{eq}}(\lambda),\qquad
 S_1'(\lambda)<(S_1^{\mathrm{eq}})'(\lambda).
\]
At every stationary point of $S_1$, we have
$(S_1^{\mathrm{eq}})'>0$ and hence
$S_1(\lambda)<S_1^{\mathrm{eq}}(\lambda)<V_0$. This proves the first part of
Lemma~\ref{lem:separation}.

For the second part, suppose $S_2$ has two decreasing pieces separated by
an increasing piece, as is necessary for two downward crossings of level one. Its derivative is
negative near minus infinity and positive near zero. Since it has at
most three zeros, its successive signs must be $-,+,-,+$. Define
\[
 G_2(\lambda)=\alpha_1L(-\lambda)+p_3^-(\lambda),
\]
and, on $G_2'>\alpha_2$, define its fold curve by
\[
 B_2(\lambda)=\lambda
 +\psi\!\left(\frac{G_2'(\lambda)}{G_2'(\lambda)-\alpha_2}\right).
\]
Thus $S_2'(\lambda)=0$ exactly when
$\eta_2=B_2(\lambda)$, so the horizontal line of height $\eta_2$ meets $B_2$
three times. There is one
intersection on each of its three monotone pieces. The last local
minimum of $S_2$, denoted by $\lambda_A$, lies on the final decreasing
piece of $B_2$.

Use the critical-height formula with $G=G_2$ and upper coefficient
$\alpha_2$. On the final decreasing piece of $B_2$, the stationary height decreases
strictly by \eqref{eq:app-critical-height}. As $\lambda\uparrow0$,
its endpoint is explicit:
\begin{align*}
 G_2(\lambda)&\longrightarrow\alpha_1+p_3^-(0),\\
 G_2'(\lambda)&\longrightarrow\infty,\\
 \alpha_2\frac{G_2'(\lambda)}{G_2'(\lambda)-\alpha_2}
   &\longrightarrow\alpha_2,\\
 G_2(\lambda)+\alpha_2\frac{G_2'(\lambda)}
 {G_2'(\lambda)-\alpha_2}&\longrightarrow V_0.
\end{align*}
At $\lambda_A$, the equality $B_2(\lambda_A)=\eta_2$ gives
\[
 S_2(\lambda_A)>V_0.
\]
This proves the second height comparison directly from the fold curve.

\subsection{The last upper branch: inspect only its stationary points}

For the final acute-case branch, we show directly that every stationary
point of $S_3$ is a strict local minimum. The common differential system
above then avoids introducing and differentiating ratios of branches.

First we need one elementary inequality. If $0<\ell<1<u$ and
$\psi(\ell)=\psi(u)$, then $\ell+u>2$.
The identity $\psi(\ell)=\psi(u)$ gives
\[
 \log u-\log \ell=u-\ell.
\]
Since $x\mapsto1/x$ is strictly convex on $(0,\infty)$ and $\ell\ne u$,
strict Jensen's inequality for the uniform distribution on $[0,1]$ gives
\begin{align*}
 1
 &=\frac{\log u-\log \ell}{u-\ell}
   =\int_0^1\frac{dt}{\ell+t(u-\ell)}\\
 &>\frac{1}{\int_0^1[\ell+t(u-\ell)]\,dt}
   =\frac2{\ell+u}.
\end{align*}
Multiplication by $\ell+u>0$ proves the claim in a single argument for all
$0<\ell<1<u$.
Now set
\[
 \ell_3=L(\eta_3-\lambda),\qquad
 u_3=U(\eta_3-\lambda),\qquad
 \ell_i=L(\eta_i-\lambda)\quad(i=1,2).
\]
Since $\eta_i<\eta_3$ and $L$ is decreasing,
\[
 \ell_i>\ell_3,\qquad 1-\ell_i<1-\ell_3<u_3-1.
\]
On this branch $a_ip_i=\ell_i$ for $i=1,2$ and $a_3p_3=u_3$.
Therefore
\[
 z_i=\frac1{1-\ell_i}>0\quad(i=1,2),\qquad
 z_3=\frac1{1-u_3}<0,
\]
and $z_i>|z_3|$ for $i=1,2$.
The coordinate slopes have signs $\rho_1,\rho_2>0>\rho_3$. At a stationary
point of $S_3=p_1+p_2+p_3$, its first derivative gives
\[
 0=S_3'=\rho_1+\rho_2+\rho_3,
 \qquad \rho_3=-(\rho_1+\rho_2).
\]
Using $\rho_i'=\rho_i z_i^2$ for all three coordinates, we obtain
\begin{align*}
 S_3''
 &=\rho_1z_1^2+\rho_2z_2^2+\rho_3z_3^2\\
 &=\rho_1(z_1^2-z_3^2)+\rho_2(z_2^2-z_3^2)\\
 &>0.
\end{align*}

There cannot be two stationary points. Indeed, just after the first one
$S_3'>0$, whereas just before the second one $S_3'<0$. Choose points
$x<y$ with these respective signs. A maximum of $S_3$ on $[x,y]$ must
then occur at an interior point $c$: neither endpoint can be a maximum
because of its derivative sign. At this point $S_3'(c)=0$ and
$S_3''(c)\leq0$, contradicting the strict inequality just proved.

Thus $S_3$ has at most one stationary point, and at that point its
derivative changes from negative to positive. If it has no stationary
point, its derivative has a constant sign. In either case there is at
most one decreasing interval. On that interval $S_3=1$ has at most one
downward crossing, so $m_3\leq1$. Coincident barriers are handled by
the perturbation argument in Appendix~\ref{app:genericity}.
 \section{The curvature calculation through two-point variance}
\label{app:algebra}
\label{app:curvature}

This appendix records the full algebra behind the curvature proof in
Lemma~\ref{lem:curvature}. In particular, it derives the factorization of
$K'/K$, the stationary-point identities, and the mixed-sign cap without
suppressing intermediate steps. The main algebraic step uses only the
variance and third moment of a two-point distribution. Primes throughout
mean differentiation with respect to $\lambda$.

\subsection{Definitions and the identity to prove}

Suppose that $u>v>0$ and $\rho_+,\rho_->0$ satisfy
\begin{align}
 u'&=u^2(u-1), & v'&=v^2(v-1),\\
 \rho_+'&=\rho_+u^2, & \rho_-'&=\rho_-v^2.
\end{align}
Set
\begin{equation}
 J=\rho_++\rho_-,\qquad r=\sqrt{J'/J},\qquad K=\frac{J}{1+r}.
 \label{eq:app-definitions}
\end{equation}
When $\rho_+,\rho_-$ are the two lower-coordinate slopes and $G$ is
their coordinate sum, $J=G'$ and $J'=G''$. Thus this definition of $K$
is exactly \eqref{eq:Kdef}.
In particular,
\[
 r^2=\frac{\rho_+u^2+\rho_-v^2}{J},\qquad v<r<u.
\]
Define
\begin{align}
 \delta&=u+v-r,\notag\\
 Z&=\frac1u+\frac1v-\frac1r-\frac3{3\delta-2}.
 \label{eq:app-normalized-Z}
\end{align}
We use these quantities on the open region where $\delta>1$.
The identity we need is
\begin{align}
 \frac{K'}K
 &=-\frac{uv(r+u)(r+v)(3\delta-2)}
 {2(u+v)(1+r)}\,Z.
 \label{eq:app-Kprime}
\end{align}
We derive this identity on the whole region $\delta>1$, before imposing
$K'=0$.

\subsection{A derivative expressed as variance}

For brevity, write
\[
 s=u+v,\qquad t=uv,\qquad N=(r+u)(r+v).
\]
Here $t$ denotes a product, not a differentiation variable. At each
fixed $\lambda$, let $X$ take the values $u,v$ with probabilities
$\rho_+/J,\rho_-/J$, respectively. Then $\mathbb E[X^2]=r^2$.

These probabilities depend on $\lambda$. We account for that dependence
by differentiating $J'$ explicitly:
\begin{align}
 J'&=\rho_+u^2+\rho_-v^2,\notag\\
 J''&=\rho_+'u^2+2\rho_+u\,u'
       +\rho_-'v^2+2\rho_-v\,v'\notag\\
    &=\rho_+(3u^4-2u^3)+\rho_-(3v^4-2v^3).
 \label{eq:app-Jsecond}
\end{align}
Differentiating $r^2=J'/J$ now gives
\begin{align}
 2rr'
   &=\frac{J''}{J}-\left(\frac{J'}J\right)^2\notag\\
   &=3\mathbb E[X^4]-2\mathbb E[X^3]-r^4.
\end{align}
Subtract $2r^3(1+r)$ and use
$\operatorname{Var}(X^2)=\mathbb E[X^4]-r^4$:
\begin{equation}
 2r\{r'-r^2(1+r)\}
 =3\operatorname{Var}(X^2)-2(\mathbb E[X^3]+r^3).
 \label{eq:app-variance-derivative}
\end{equation}
We next compute the two terms on the right without expanding a fourth
moment.

\subsection{Two elementary identities}

First, $X^2$ takes just the two values $u^2,v^2$. Thus
$(X^2-u^2)(X^2-v^2)=0$. Taking its expectation gives
\[
 \mathbb E[X^4]=(u^2+v^2)r^2-u^2v^2.
\]
Subtracting $r^4$ and factoring differences of squares yields
\begin{align}
 \operatorname{Var}(X^2)
   &=(u^2-r^2)(r^2-v^2)\notag\\
   &=(r+u)(r+v)(u-r)(r-v)\notag\\
   &=N(r\delta-t).
 \label{eq:app-two-point-variance}
\end{align}
The last equality uses
$(u-r)(r-v)=sr-r^2-t=r\delta-t$.

Second, both possible values of $X$ satisfy $X^2=sX-t$, so
\begin{align}
 \mathbb E[X]&=\frac{r^2+t}{s},\notag\\
 \mathbb E[X^3]
   &=s\mathbb E[X^2]-t\mathbb E[X]\notag\\
   &=sr^2-\frac{t(r^2+t)}{s}.
\end{align}
Consequently,
\begin{align}
 \mathbb E[X^3]+r^3
   &=\frac{sr^3+(s^2-t)r^2-t^2}{s}\notag\\
   &=\frac{(r^2+sr+t)(sr-t)}{s}\notag\\
   &=\frac{N(sr-t)}{s}.
 \label{eq:app-third-moment}
\end{align}
To verify the middle line, multiply the two factors: the terms
$-srt$ and $+srt$ cancel. The last line follows from
$r^2+sr+t=(r+u)(r+v)=N$.

Substituting \eqref{eq:app-two-point-variance} and
\eqref{eq:app-third-moment} into \eqref{eq:app-variance-derivative}
now gives
\begin{align}
 2r\{r'-r^2(1+r)\}
   &=\frac Ns\{3s(r\delta-t)-2(sr-t)\}\notag\\
   &=\frac Ns\{sr(3\delta-2)-t(3s-2)\}.
 \label{eq:app-factorization}
\end{align}
Since $s/t=1/u+1/v$ and $\delta=s-r$, the expression in braces is
\begin{align}
 rt(3\delta-2)Z
   &=(rs-t)(3\delta-2)-3rt\notag\\
   &=rs(3\delta-2)-t(3s-2).
 \label{eq:app-factor-Z}
\end{align}
Hence
\begin{equation}
 r'=r^2(1+r)+\frac{tN(3\delta-2)}{2s}\,Z.
 \label{eq:app-rprime-Z}
\end{equation}
Finally, direct differentiation of $K$ proves the desired identity:
\begin{align}
 \frac{K'}K
   &=\frac{J'}J-\frac{r'}{1+r}\notag\\
   &=r^2-\frac{r'}{1+r}\notag\\
   &=-\frac{tN(3\delta-2)}{2s(1+r)}\,Z.
\end{align}

\subsection{The curvature sign at a stationary point}

Fix a point where $K'=0$ and $\delta>1$. Equation~\eqref{eq:app-Kprime}
and the direct derivative of $K$ imply, respectively,
\begin{equation}
 Z=0,\qquad r'=r^2(1+r).
 \label{eq:app-critical-relations}
\end{equation}
We first differentiate the reciprocal part of $Z$ and then use these
pointwise relations:
\begin{align}
 \left(\frac1u+\frac1v-\frac1r\right)'
   &=-\frac{u'}{u^2}-\frac{v'}{v^2}+\frac{r'}{r^2}\notag\\
   &=-(u-1)-(v-1)+(r+1)\notag\\
   &=3-\delta.
 \label{eq:app-critical-reciprocal}
\end{align}
For $\delta'$, use the ordinary square and cube sums:
\begin{align}
 u'+v'
   &=(u^3+v^3)-(u^2+v^2)\notag\\
   &=(s^3-3st)-(s^2-2t)\notag\\
   &=s^2(s-1)-t(3s-2).
\end{align}
Because $\delta=s-r$, we also have the elementary identities
\[
 s^3-r^3-\delta^3=3sr\delta,\qquad
 s^2+r^2-\delta^2=2sr.
\]
Thus, at the same stationary point,
\begin{align}
 \delta'-\delta^2(\delta-1)
   &=s^2(s-1)-t(3s-2)-r^2(r+1)
     -\delta^2(\delta-1)\notag\\
   &=3sr\delta-2sr-t(3s-2)\notag\\
   &=rt(3\delta-2)Z=0.
 \label{eq:app-critical-deltaprime}
\end{align}
In particular, $\delta'=\delta^2(\delta-1)$ there.

Now differentiate the definition of $Z$ \emph{before} substituting
these stationary-point formulas:
\begin{align}
 Z'
   &=\left(\frac1u+\frac1v-\frac1r\right)'
     +\frac{9\delta'}{(3\delta-2)^2}\notag\\
   &=3-\delta+\frac{9\delta^2(\delta-1)}{(3\delta-2)^2}\notag\\
   &=2+(\delta-1)
     \left[\left(\frac{3\delta}{3\delta-2}\right)^2-1\right]\notag\\
   &>2.
 \label{eq:app-critical-Zprime}
\end{align}
The last step uses only $\delta-1>0$ and
$3\delta/(3\delta-2)>1$.
Since \eqref{eq:app-Kprime} holds on a neighborhood, it can be
differentiated there. At $K'=Z=0$,
\begin{align}
 \frac{K''}{K}
   &=(\log K)''\notag\\
   &=-\frac{uv(r+u)(r+v)(3\delta-2)}
   {2(u+v)(1+r)}\,Z'<0.
 \label{eq:app-curvature-final}
\end{align}
This proves the conditional curvature claim. No equality imposed only at a stationary point
has been differentiated as a neighborhood identity.

\subsection{Checking the condition for both coordinate types}

For two positive lower coordinates, $u>v>1$. Because $r<u$,
\[
 \delta=u+v-r>v>1.
\]

For one positive and one negative coordinate, write
 \begin{equation}
 u>1>v>0,\qquad
 \rho_+=\alpha(u-1),\quad \rho_-=\gamma(1-v),
 \qquad \alpha,\gamma>0.
\end{equation}
We show that the cap on $K$ is exactly the condition $\delta>1$.
From the defining weighted average for $r^2$,
\begin{align}
 \alpha(u-1)(u^2-r^2)
   &=\gamma(1-v)(r^2-v^2),\notag\\
 \frac\gamma\alpha
   &=\frac{(u-r)(u+r)(u-1)}{(r-v)(r+v)(1-v)}.
 \label{eq:app-weight-ratio}
\end{align}
Meanwhile,
\begin{align}
 \gamma-\alpha-K
  &=\frac{(\gamma-\alpha)(1+r)
        -\alpha(u-1)-\gamma(1-v)}{1+r}\notag\\
  &=\frac{\gamma(r+v)-\alpha(r+u)}{1+r}.
\end{align}
Substitute \eqref{eq:app-weight-ratio} to obtain
\begin{equation}
 \gamma-\alpha-K
 =\frac{\alpha(r+u)}{1+r}
  \frac{(u-r)(u-1)-(r-v)(1-v)}{(r-v)(1-v)}.
\end{equation}
The numerator in the second fraction simplifies as follows:
\begin{align*}
 (u-r)(u-1)-(r-v)(1-v)
   &=(u^2-v^2)-(u-v)-r(u-v)\\
   &=(u-v)(u+v-r-1)\\
   &=(u-v)(\delta-1).
\end{align*}
Hence
\begin{equation}
 \gamma-\alpha-K
 =\frac{\alpha(r+u)(u-v)}{(1+r)(r-v)(1-v)}\,(\delta-1).
 \label{eq:app-cap-factor}
\end{equation}
Every factor outside $\delta-1$ is positive. Therefore
\begin{equation}
 K<\gamma-\alpha\quad\Longleftrightarrow\quad \delta>1,
\end{equation}
and the same curvature result applies at every stationary point below
the required cap.

We finish by extracting the interval statement used in the main text.
With two positive lower coefficients, every stationary point of $K$ is a
strict local maximum. Hence a disconnected strict superlevel set would
force an intervening local minimum, a contradiction. In the mixed case,
if $0<\beta<\gamma-\alpha$ and $\{K>\beta\}$ were disconnected, an
intervening local minimum would have value at most $\beta$. By
\eqref{eq:app-cap-factor}, that stationary point lies in the region
$\delta>1$ and is again a strict local maximum, a contradiction. This
proves the two superlevel-set assertions used in Lemma~\ref{lem:curvature}.
 \section{Exact calculations for the sharp example}
\label{app:example}

With common covariance $I_2$, take
\begin{equation}
 \mu_1=(R,0),\quad
 \mu_2=(-R/2,\sqrt3R/2),\quad
 \mu_3=(-R/2,-\sqrt3R/2),\quad
 w_i=\frac13,\quad R^2=\frac83\log2.
\end{equation}
All means have squared norm $R^2$ and distinct means have inner product
$-R^2/2$. At $x=t\mu_1$, the posterior likelihood ratio of the first
component to each other component is
\[
 \exp\bigl(x\cdot(\mu_1-\mu_2)\bigr)
 =\exp(3tR^2/2)=2^{4t}.
\]
Consequently the posterior weights at $t=0,1/4,1/2$ are, respectively,
\[
 (1/3,1/3,1/3),\qquad(1/2,1/4,1/4),\qquad(2/3,1/6,1/6).
\]
Their posterior means are $0$, $\mu_1/4$, and $\mu_1/2$, so all three
points are stationary. Rotation by $2\pi/3$ gives the other points.

For a posterior vector $(p,(1-p)/2,(1-p)/2)$, the mean lies on the first
coordinate axis and the covariance is diagonal. Direct computation gives
\begin{align}
 m_x&=R\left(p-\frac{1-p}{2}\right)=\frac{R(3p-1)}2,\\
 \operatorname{Var}_p(\mu_x)
 &=R^2\left(p+\frac{1-p}{4}\right)-m_x^2
   =\frac{9R^2}{4}p(1-p),\\
 \operatorname{Var}_p(\mu_y)&=\frac{3R^2}{4}(1-p),\qquad
 \operatorname{Cov}_p(\mu_x,\mu_y)=0.
\end{align}
Subtracting the identity gives the two log-density Hessian eigenvalues:
\begin{center}
\renewcommand{\arraystretch}{1.3}
\begin{tabular}{lll}
\toprule
Point & Eigenvalues & Type\\
\midrule
$0$ & $\frac43\log2-1$, $\frac43\log2-1$ & strict mode\\
$\mu_i/2$ & $\frac43\log2-1$, $\frac23\log2-1$ & strict mode\\
$\mu_i/4$ & $\frac32\log2-1$, $\log2-1$ & saddle\\
\bottomrule
\end{tabular}
\end{center}
The bounds $2/3<\log2<3/4$ follow from the strict midpoint and trapezoidal
bounds for the integral of the convex function $1/x$ on $[1,2]$.
They verify all signs in the table without numerical approximation.

The density values at the four modes agree:
\begin{align}
 f(0)&=\frac{e^{-R^2/2}}{2\pi}
       =\frac{2^{-4/3}}{2\pi},\\
 f(\mu_i/2)&=\frac{e^{-R^2/8}+2e^{-7R^2/8}}{6\pi}
       =\frac{2^{-4/3}}{2\pi}.
\end{align}
The saddle value is $f(\mu_i/4)=2^{-3/4}/(3\pi)$. Theorem~\ref{thm:main}
excludes further modes; since the density tends to zero at infinity,
the four equal-height modes are global maxima. Figure~\ref{fig:four}
evaluates this explicit density on a deterministic grid and along a line.
Its near-maximum contour levels emphasize the shallow separation between
these modes. No numerical classification is used in the proof.

\subsection{Computational verification of algebra}
An exact SymPy verification script was used to check the displayed algebra.
The checks cover the moving two-point weights, the variance and
third-moment factorizations, the positive-factor identity for $K'/K$, the stationary
derivative of $Z$, the constrained-Hessian determinant, the fold derivative,
the stationary-height derivative, the unified coordinate derivatives,
the mixed-sign cap factorization, and the last-branch second derivative.
For Appendix~\ref{app:two-on-one}, the script checks the stationary equations,
the nonnegative-quadratic modification and its separable coefficients,
the Hessian forms and branch signs, and the explicit Gaussian realization.
The sign certificates use only $2/3<\log2<3/4$, proved above by integral
bounds. The script differentiates before substituting stationary-point
relations. These checks support the displayed calculations; they do not
replace the global sign arguments, the branch counts, or the cited
finiteness theorem and perturbation arguments.
 \section{Two minima can lie on the same upper branch}\label{app:two-on-one}

At a stationary point of \eqref{eq:potential}, put $h_i=1/p_i-a_i$. We call
coordinate $i$ upper when $h_i<0$ and lower when $h_i>0$. At most one
coordinate can be upper at a strict nondegenerate minimum, but that does
\emph{not} imply uniqueness of the minimum on that branch.

\begin{proposition}\label{counter:prop:counterexample}
There is an acute-triangle Gaussian mixture whose reduced free energy has
two distinct nondegenerate strict minima with coordinate 1 upper and
coordinates 2 and 3 lower.
\end{proposition}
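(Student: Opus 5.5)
The plan is to \emph{reverse-engineer} the parameters. I will prescribe two target points $p,q\in\mathcal P$ in advance and then solve for coefficients $a_i>0$ and weights $w_i$ that make both points stationary for \eqref{eq:potential}. After that I will verify the local type of each point with Lemma~\ref{lem:appendix-minimum} and realize the coefficients by an acute triangle. Throughout I use the sign convention of this appendix: $h_i=1/p_i-a_i$, and coordinate $i$ is upper when $h_i<0$.

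\textbf{Step 1: forcing two stationary points.} The stationary equations \eqref{eq:stationary} at $p$ and $q$, with multipliers $\lambda$ and $\mu$, read
\[
 \log w_i=\log p_i-a_ip_i+\tfrac{a_i}{2}-\lambda=\log q_i-a_iq_i+\tfrac{a_i}{2}-\mu .
\]
Eliminating $w_i$ leaves the three scalar conditions
\[
 \log\frac{p_i}{q_i}-a_i(p_i-q_i)=c,\qquad c:=\lambda-\mu .
\]
If $p_i\ne q_i$ for every $i$, these determine
\[
 a_i=\frac{\log(p_i/q_i)-c}{p_i-q_i}.
\]
The first displayed identity then defines $w_i$ up to the common factor $e^{-\lambda}$, and $\lambda$ is fixed by the normalization $\sum_iw_i=1$. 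So the free data are $p$, $q$ and one real number $c$.

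\textbf{Step 2: choosing the data.} I will take $p_1$ and $q_1$ both large, say near $0.6$ and $0.75$, and the remaining coordinates small and unequal. I then tune $c$ so that:
\begin{itemize}
\item all three $a_i$ are positive;
\item $a_1>\max(1/p_1,1/q_1)$, so coordinate $1$ is upper at both points;
\item $a_2,a_3$ are small enough that $h_2,h_3>0$ at both points;
\item $h_1h_2+h_1h_3+h_2h_3>0$ at both points.
\end{itemize}
The last condition is the determinant condition \eqref{eq:appendix-hessian-determinant}. By Lemma~\ref{lem:appendix-minimum} it makes each point a nondegenerate strict minimum with exactly coordinate $1$ upper. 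Two further checks are automatic. Positivity of all $a_i$ gives the Gram condition \eqref{eq:gram}. The squared side lengths $d_{ij}^2=a_i+a_j$ then satisfy $d_{ij}^2+d_{ik}^2-d_{jk}^2=2a_i>0$, so they define a genuine acute triangle; its vertices are the whitened means, with covariance $I_2$. By Lemma~\ref{lem:appendix-extrema}, the two minima of $F$ give two modes of the explicit Gaussian mixture.

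\textbf{Step 3: certifying the signs.} I will choose $p,q$ rational, and if convenient $c$ expressed through $\log2$ or $\log3$. All sign conditions then reduce to inequalities between rationals and a few logarithms, which can be certified with elementary bounds such as $2/3<\log2<3/4$, in the spirit of Appendix~\ref{app:example}.

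\textbf{Main obstacle.} The hard part is Step 2, namely satisfying all the inequalities simultaneously. The determinant condition with $h_1<0$ requires $|h_1|$ to be small compared with $h_2,h_3$. That pushes $a_1$ only slightly above $1/p_1$ and $1/q_1$, but $a_1$ is tied to $c$ through the formula for $a_i$, which also moves $a_2$ and $a_3$. My first attempt would be a near-symmetric choice with $p_2\approx p_3$ and $q_2\approx q_3$, slightly perturbed so that every $p_i\ne q_i$. This reduces the search to essentially two parameters. I would scan it numerically, then fix a rational point that lies well inside the feasible region and certify its signs exactly.
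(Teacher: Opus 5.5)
Your scheme is sound as a way to build examples. Eliminating $w$ between the two stationarity systems, solving $a_i=(\log(p_i/q_i)-c)/(p_i-q_i)$, certifying type by Lemma~\ref{lem:appendix-minimum}, and realizing $d_{ij}^2=a_i+a_j$ as an acute triangle all work. The gap is that the statement is purely existential and you never exhibit a feasible $(p,q,c)$. Everything is deferred to a numerical scan, and the region you propose to scan contains no solutions.

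To see this, take your near-symmetric family at its centre: $p=(x,\tfrac{1-x}2,\tfrac{1-x}2)$ and $q=(y,\tfrac{1-y}2,\tfrac{1-y}2)$ with $x<y$. The determinant condition at each point is $h_2(2h_1+h_2)>0$, i.e.\ $a_1+\tfrac12a_2<1/(t(1-t))$ for $t=x$ and $t=y$. Your own formulas give $a_1+\tfrac12a_2=(\ell(y)-\ell(x))/(y-x)$ with $\ell(t)=\log\frac{t}{1-t}$, \emph{independently of $c$}. So tuning $c$ cannot help. You need the mean of $\ell'(t)=1/(t(1-t))$ over $[x,y]$ to lie below $\ell'$ at both endpoints. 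This is false whenever $1/2\le x<y$, because $\ell'$ increases there. For $x=0.6$, $y=0.75$ the mean is $\log2/0.15\approx4.62$, against $1/0.24\approx4.17$ at $x$.

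Asymmetry does not rescue the region either. Write $D_1=q_1-p_1=D_2+D_3$ with $D_i=p_i-q_i>0$. Then $|h_1(p)|=c/D_1-e_1$ and $h_i(p)=c/D_i-e_i$, where the $e_j>0$ do not depend on $c$. The tangent-plane bound for the concave map $(B,C)\mapsto BC/(B+C)$ at $(c/D_2,c/D_3)$ turns the determinant condition at $p$ into $D_1^2e_1>D_2^2e_2+D_3^2e_3$. Elementary bounds on $\log(1\pm s)$ and H\"older's inequality then force $p_1<1/2$ at the minimum with the smaller upper coordinate.

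The missing idea is to make the two upper coordinates straddle $1/2$. With $x=1/3$, $y=2/3$ the mean is $6\log2<9/2$, precisely because $\log2<3/4$. Then $a_1=3(\log2+c)$ and $a_2=a_3=6(\log2-c)$, and all your conditions hold for every $c\in(1-\log2,\log2)$. Note that $p_2=p_3$ does not conflict with $p_i\ne q_i$, so no perturbation is needed. The choice $c=\tfrac23\log2$ gives $a=(5\log2,2\log2,2\log2)$ and $w\propto(\sqrt2,1,1)$, which is exactly the paper's example.

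The paper reaches these data without any search. It starts from the equilateral four-mode potential with $\theta=\log2$, whose minima $(\tfrac13,\tfrac13,\tfrac13)$ and $(\tfrac23,\tfrac16,\tfrac16)$ are already known. It then adds $\tfrac\theta2(p_2-p_3)^2$, which vanishes to first order at both points and has positive semidefinite Hessian, so both remain strict minima. Rewriting this term separably on the simplex changes only the coefficients, to $(5\theta,2\theta,2\theta)$, and the weights, to $w\propto(\sqrt2,1,1)$. The larger first coefficient puts both minima on the coordinate-1-upper branch.
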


\begin{proof}
\textbf{Preserve two known minima.}
Put $\theta=\log2$ and take the equilateral potential from
Appendix~\ref{app:example}, up to an additive constant:
\begin{equation}\label{counter:eq:base}
 F_0(p)=\sum_i p_i\log p_i-2\theta\sum_i p_i^2,\qquad
 p^{(0)}=\left(\frac13,\frac13,\frac13\right),\quad
 p^{(1)}=\left(\frac23,\frac16,\frac16\right).
\end{equation}
Both displayed points are stationary. For a tangent vector with
$v_1+v_2+v_3=0$, their Hessian forms are
\begin{align*}
 D^2F_0(p^{(0)})[v,v]&=(3-4\theta)\sum_i v_i^2,\\
 D^2F_0(p^{(1)})[v,v]&=\left(\frac92-6\theta\right)v_1^2
                         +(3-2\theta)(v_2-v_3)^2.
\end{align*}
The bounds $2/3<\theta<3/4$, proved in Appendix~\ref{app:example}, make
both forms positive definite. Now set
\begin{equation}\label{counter:eq:penalty}
 \widetilde F(p)=F_0(p)+\frac{\theta}{2}(p_2-p_3)^2.
\end{equation}
At both points $p_2=p_3$, so the added term and its gradient vanish.
Its Hessian form is $\theta(v_2-v_3)^2\geq0$. Thus both points remain
nondegenerate strict minima of $\widetilde F$.

\textbf{Check the branch labels.}
On the simplex, $p_2+p_3=1-p_1$, and therefore
\begin{align*}
 (p_2-p_3)^2
 &=2p_2^2+2p_3^2-(1-p_1)^2\\
 &=-p_1^2+2p_2^2+2p_3^2+2p_1-1.
\end{align*}
Hence $\widetilde F$ has the generic separable form
\[
 \widetilde F(p)=\sum_i\left(
 p_i\log p_i-\frac{a_i}{2}p_i^2-b_ip_i\right)-\frac\theta2,
\]
with
\begin{equation}\label{counter:eq:coefficients}
 (a_1,a_2,a_3)=(5\theta,2\theta,2\theta),\qquad
 (b_1,b_2,b_3)=(-\theta,0,0).
\end{equation}
Write $\mathbf h(p):=(h_1(p),h_2(p),h_3(p))$. The two branch-sign
vectors are
\[
 \mathbf h(p^{(0)})=(3-5\theta,\,3-2\theta,\,3-2\theta),\qquad
 \mathbf h(p^{(1)})=(\tfrac32-5\theta,\,6-2\theta,\,6-2\theta).
\]
Again $2/3<\theta<3/4$ shows that both have signs $(-,+,+)$.
Thus the two minima lie on the same upper branch.

\textbf{Realize the potential by a Gaussian mixture.}
Take covariance $I_2$ and
\begin{equation}\label{counter:eq:mixture}
 \mu_1=(\sqrt{6\theta},0),\quad
 \mu_2=(0,\sqrt\theta),\quad
 \mu_3=(0,-\sqrt\theta),\qquad
 (w_1,w_2,w_3)=\frac{(\sqrt2,1,1)}{2+\sqrt2}.
\end{equation}
The squared distances are $d_{12}^2=d_{13}^2=7\theta$ and $d_{23}^2=4\theta$,
which give the coefficients $a_i$ in \eqref{counter:eq:coefficients}.
All are positive, so the triangle is acute. Moreover,
\[
 \log w_i-\frac{a_i}{2}
 =b_i-\theta-\log(2+\sqrt2),\qquad i=1,2,3.
\]
The mixture potential therefore differs from $\widetilde F$ only by a
constant on the simplex. By Lemma~\ref{lem:appendix-extrema}, its two minima
correspond to distinct modes at
$(\sqrt{6\theta}/3,0)$ and $(2\sqrt{6\theta}/3,0)$.
\end{proof}

The nondegenerate minima and their strict branch signs persist under
sufficiently small perturbations, by the implicit function theorem.
Thus the example is not restricted to the displayed symmetric configuration.
 

\begin{thebibliography}{13}
\providecommand{\natexlab}[1]{#1}
\providecommand{\url}[1]{\texttt{#1}}
\expandafter\ifx\csname urlstyle\endcsname\relax
  \providecommand{\doi}[1]{doi: #1}\else
  \providecommand{\doi}{doi: \begingroup \urlstyle{rm}\Url}\fi

\bibitem[Am{\'e}ndola et~al.(2020)Am{\'e}ndola, Engstr{\"o}m, and
  Haase]{amendola2020}
Carlos Am{\'e}ndola, Alexander Engstr{\"o}m, and Christian Haase.
\newblock Maximum number of modes of {Gaussian} mixtures.
\newblock \emph{Information and Inference: A Journal of the IMA}, 9\penalty0
  (3):\penalty0 587--600, 2020.
\newblock \doi{10.1093/imaiai/iaz013}.

\bibitem[Carreira-Perpi{\~n}{\'a}n(2006)]{carreira2006}
Miguel~{\'A}. Carreira-Perpi{\~n}{\'a}n.
\newblock Acceleration strategies for {Gaussian} mean-shift image segmentation.
\newblock In \emph{2006 {IEEE} Computer Society Conference on Computer Vision
  and Pattern Recognition}, volume~1, pages 1160--1167, 2006.
\newblock \doi{10.1109/CVPR.2006.44}.

\bibitem[Carreira-Perpi{\~n}{\'a}n(2007)]{carreira2007}
Miguel~{\'A}. Carreira-Perpi{\~n}{\'a}n.
\newblock {Gaussian} mean-shift is an {EM} algorithm.
\newblock \emph{{IEEE} Transactions on Pattern Analysis and Machine
  Intelligence}, 29\penalty0 (5):\penalty0 767--776, 2007.
\newblock \doi{10.1109/TPAMI.2007.1057}.

\bibitem[Carreira-Perpi{\~n}{\'a}n and Williams(2003)]{carreirawilliams2003}
Miguel~{\'A}. Carreira-Perpi{\~n}{\'a}n and Christopher K.~I. Williams.
\newblock An isotropic {Gaussian} mixture can have more modes than components.
\newblock Technical Report EDI-INF-RR-0185, School of Informatics, University
  of Edinburgh, 2003.
\newblock URL
  \url{https://homepages.inf.ed.ac.uk/ckiw/postscript/EDI-INF-RR-0185.pdf}.

\bibitem[Ellis and Wang(1990)]{elliswang1990}
Richard~S. Ellis and Kongming Wang.
\newblock Limit theorems for the empirical vector of the {Curie--Weiss--Potts}
  model.
\newblock \emph{Stochastic Processes and their Applications}, 35\penalty0
  (1):\penalty0 59--79, 1990.
\newblock \doi{10.1016/0304-4149(90)90122-9}.

\bibitem[Furman et~al.(1977)Furman, Dattagupta, and Griffiths]{furman1977}
David Furman, Sushanta Dattagupta, and Robert~B. Griffiths.
\newblock Global phase diagram for a three-component model.
\newblock \emph{Physical Review B}, 15\penalty0 (1):\penalty0 441--464, 1977.
\newblock \doi{10.1103/PhysRevB.15.441}.

\bibitem[Gaite et~al.(1999)Gaite, Margalef-Roig, and
  Miret-Art{\'e}s]{gaite1998}
J.~Gaite, J.~Margalef-Roig, and S.~Miret-Art{\'e}s.
\newblock Analysis of a three-component model phase diagram by catastrophe
  theory: Potentials with two order parameters.
\newblock \emph{Physical Review B}, 59:\penalty0 8593--8601, 1999.
\newblock \doi{10.1103/PhysRevB.59.8593}.
\newblock URL \url{https://arxiv.org/abs/cond-mat/9807301}.

\bibitem[Kullback and Leibler(1951)]{kullback1951}
Solomon Kullback and Richard~A. Leibler.
\newblock On information and sufficiency.
\newblock \emph{The Annals of Mathematical Statistics}, 22\penalty0
  (1):\penalty0 79--86, 1951.
\newblock \doi{10.1214/aoms/1177729694}.

\bibitem[K{\"u}lske and Mei{\ss}ner(2020)]{kulske2020}
Christof K{\"u}lske and Daniel Mei{\ss}ner.
\newblock Stable and metastable phases for the {Curie--Weiss--Potts} model in
  vector-valued fields via singularity theory.
\newblock \emph{Journal of Statistical Physics}, 181:\penalty0 968--989, 2020.
\newblock \doi{10.1007/s10955-020-02615-y}.

\bibitem[Neal and Hinton(1998)]{nealhinton1998}
Radford~M. Neal and Geoffrey~E. Hinton.
\newblock A view of the {EM} algorithm that justifies incremental, sparse, and
  other variants.
\newblock In Michael~I. Jordan, editor, \emph{Learning in Graphical Models},
  pages 355--368. Kluwer Academic Publishers, Dordrecht, 1998.
\newblock \doi{10.1007/978-94-011-5014-9_12}.

\bibitem[Okuno and Kabata(2026)]{okuno2026}
Akifumi Okuno and Yutaro Kabata.
\newblock On mixtures of three homoscedastic {Gaussian} densities: An
  unconditional sharper bound on the number of modes, 2026.
\newblock URL \url{https://arxiv.org/abs/2607.17506v1}.
\newblock arXiv:2607.17506v1.

\bibitem[Ray and Lindsay(2005)]{raylindsay2005}
Surajit Ray and Bruce~G. Lindsay.
\newblock The topography of multivariate normal mixtures.
\newblock \emph{The Annals of Statistics}, 33\penalty0 (5):\penalty0
  2042--2065, 2005.
\newblock \doi{10.1214/009053605000000417}.

\bibitem[Shannon(1948)]{shannon1948}
Claude~E. Shannon.
\newblock A mathematical theory of communication.
\newblock \emph{The Bell System Technical Journal}, 27\penalty0 (3):\penalty0
  379--423, 1948.
\newblock \doi{10.1002/j.1538-7305.1948.tb01338.x}.

\end{thebibliography}
\end{document}